\newcommand{\triplearrows}{\begin{smallmatrix} \to \\ \to \\ 
\to \end{smallmatrix} }
\newcommand{\perf}{\mathrm{Perf}}
\renewcommand{\sp}{\mathrm{Sp}}
\renewcommand{\phi}{\varphi}
\DeclareSymbolFontAlphabet{\mathbb}{AMSb} 
\DeclareSymbolFontAlphabet{\mathbbl}{bbold} 
\newcommand{\Prism}{{\mathlarger{\mathbbl{\Delta}}}}
\renewcommand{\hom}{\mathrm{Hom}}
\newcommand{\THH}{\mathrm{THH}}
\newcommand{\TC}{\mathrm{TC}}
\newcommand{\TP}{\mathrm{TP}}
\newcommand{\ainf}{A_{\mathrm{inf}}}
\newcommand{\md}{\mathrm{Mod}}
\theoremstyle{definition}
\newtheorem{definition}{Definition}[section]
\newtheorem{example}[definition]{Example}
\newtheorem{construction}[definition]{Construction}
\newtheorem{remark}[definition]{Remark}
\newtheorem{question}[definition]{Question}
\theoremstyle{theorem}
\newtheorem{proposition}[definition]{Proposition}
\newtheorem{lemma}[definition]{Lemma}
\newtheorem{corollary}[definition]{Corollary}
\newtheorem{theorem}[definition]{Theorem}
\begin{document}

\newcommand{\galq}{\mathrm{Gal}_{\mathbb{Q}}}
\newcommand{\st}{\mathrm{St}}

\title{Remarks on $K(1)$-local $K$-theory}
\author{Bhargav Bhatt, Dustin Clausen, and Akhil Mathew}
\date{\today}
\maketitle 

\begin{abstract}
We prove two basic structural properties of the algebraic $K$-theory of rings
after $K(1)$-localization at an implicit prime $p$. Our first result (also
recently obtained by Land--Meier--Tamme by different methods) states that
$L_{K(1)} K(R)$ is insensitive to inverting $p$ on $R$; we deduce this from
recent advances in prismatic cohomology and $\mathrm{TC}$. Our second result
yields a K\"unneth formula in $K(1)$-local $K$-theory for adding $p$-power roots of unity to $R$. 
\end{abstract}
\section{Introduction}

In this note, we consider the algebraic $K$-theory spectrum $K(R)$ of a ring
$R$, after applying the operation $L_{K(1)}$ of $K(1)$-localization at a prime
$p$ which is fixed throughout.
The construction $R \mapsto L_{K(1)} K(R)$ featured in the work of Thomason
\cite{Tho85} connecting algebraic $K$-theory and \'{e}tale cohomology, cf.~\cite{Mitchell} for a survey. Here we record two basic structural
features of $L_{K(1)} K(R)$. 

We first show that $K(1)$-local $K$-theory is insensitive to
inverting $p$; a stronger result  (for connective $K(1)$-acyclic
$E_1$-rings) has been obtained recently by  
Land--Meier--Tamme in \cite[Cor.~3.28]{LMT}. 

\begin{theorem}\label{mainthmintro}
Let $A$ be an associative ring, or even an $E_1$-algebra over $\mathbb{Z}$. Then the map of spectra $K(A) \to K(A[1/p])$ induces an equivalence
$L_{K(1)} K(A) \simeq L_{K(1)} K(A[1/p])$. 
\end{theorem}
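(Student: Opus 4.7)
The plan is to show that the fiber $F := \mathrm{fib}(K(A) \to K(A[1/p]))$ is $K(1)$-acyclic, which immediately yields the desired equivalence after $L_{K(1)}$.

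\emph{Step 1.} By the Thomason--Trobaugh localization sequence in algebraic $K$-theory (extended to $E_1$-rings by Blumberg--Mandell), $F$ is identified with $K(\mathcal{C})$, where $\mathcal{C} \subset \perf(A)$ is the full stable subcategory of perfect $A$-modules $M$ with $M[1/p] \simeq 0$. So it suffices to prove $L_{K(1)} K(\mathcal{C}) = 0$.

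\emph{Step 2.} The object $A/p := \mathrm{cofib}(p : A \to A)$ is compact, lies in $\mathcal{C}$, and generates $\mathcal{C}$ as a thick subcategory. By the Schwede--Shipley theorem we obtain an equivalence $\mathcal{C} \simeq \perf(E)$ with $E := \mathrm{End}_A(A/p)^{\mathrm{op}}$. Because $A$ is a $\mathbb{Z}$-algebra, $A/p \simeq A \otimes_{\mathbb{Z}} \mathbb{Z}/p$, and multiplication by $p$ on $\mathbb{Z}/p$ is literally zero; hence $p \cdot \mathrm{id}_{A/p}$ is nullhomotopic, and so $p \cdot \mathrm{id}_E \simeq 0$. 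In particular $\pi_* E$ is $p$-torsion.

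\emph{Step 3.} We are reduced to showing $L_{K(1)} K(E) = 0$ for any $E_1$-ring $E$ with $p \cdot \mathrm{id}_E \simeq 0$. I would first use the truncating/nil-invariance results of Land--Mathew--Meier--Tamme to replace $E$ by its connective cover $\tau_{\geq 0} E$ (which still has mod-$p$ homotopy), and then invoke the Clausen--Mathew--Morrow identification $L_{K(1)} K(\tau_{\geq 0} E) \simeq L_{K(1)} \TC(\tau_{\geq 0} E)$. Via the Nikolaus--Scholze formula $\TC = \mathrm{fib}(\varphi - \mathrm{can} : \mathrm{TC}^- \to \TP)$ and the prismatic perspective on $\TC$, one checks that the $p$-completed $\TC$ of an $E_1$-ring with mod-$p$ homotopy has bounded-above homotopy. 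Such spectra are necessarily $K(1)$-acyclic since $K(1)$-local spectra are $v_1$-periodic and hence only detect the asymptotic behavior of homotopy.

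\emph{Main obstacle.} The principal difficulty lies in the last step: cleanly establishing $L_{K(1)} K(E) = 0$ for $E_1$-rings $E$ with $p \cdot \mathrm{id}_E \simeq 0$, especially in the non-connective setting where one cannot directly apply Clausen--Mathew--Morrow. For connective commutative $\mathbb{F}_p$-algebras the vanishing is essentially classical (via Thomason or Geisser--Hesselholt), but the general case demands the $E_1$-enhancement of Clausen--Mathew--Morrow together with the precise structural control over $\TC$ afforded by the recent prismatic cohomology machinery; this is exactly the input alluded to in the introduction.
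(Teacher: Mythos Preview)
Your Steps 1 and 2 are fine and match the paper's opening move: reduce via the localization sequence to showing that $L_{K(1)} K$ vanishes on the $p$-power torsion subcategory. The genuine gap is in Step~3. None of the tools you invoke actually apply to your endomorphism ring $E$: the Clausen--Mathew--Morrow rigidity theorem is proved for commutative rings, not arbitrary $E_1$-rings; there is no general result letting you replace a non-connective $E_1$-ring by its connective cover inside $L_{K(1)} K$; and the assertion that $\TC(-;\mathbb{Z}_p)$ has bounded-above homotopy for $E_1$-rings with mod-$p$ homotopy is neither proved nor, as far as I can see, true in the generality you need (already for commutative $\mathbb{F}_p$-algebras of large Krull dimension the bound is not uniform, and the paper's vanishing $L_{K(1)}\TP(\mathcal{O}_C/p^n)=0$ is obtained by a nilpotence argument, not a boundedness one). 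Invoking Land--Meier--Tamme here is circular, since their Corollary~3.28 is precisely the theorem in question. You flag all of this in your ``Main obstacle'' paragraph, but the obstacle is real: without an $E_1$-version of the CMM comparison there is no way forward along this route.

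The paper sidesteps the issue by never leaving the commutative world. Rather than modeling $\mathcal{C}_{\mathrm{tors}}$ on a single $E_1$-ring, it exhibits $\mathcal{C}_{\mathrm{tors}} \simeq \varinjlim_n \mathrm{Mod}_{\mathbb{Z}/p^n}(\mathcal{C}_{\mathrm{tors}})$ (\Cref{colimcat}), so that each term is linear over the \emph{commutative} ring $\mathbb{Z}/p^n$ and it suffices to prove $L_{K(1)} K(\mathbb{Z}/p^n) = 0$. For that, one first passes to $\mathcal{O}_C/p^n$, where CMM does apply and gives $K \simeq \TC$ $p$-adically; then $L_{K(1)}\TP(\mathcal{O}_C/p^n)=0$ follows from an explicit $\delta$-ring calculation (\Cref{nilpotencelemma}) showing that the distinguished element $d$ is $p$-adically nilpotent in $\Prism_{\mathcal{O}_C/p^n}$; finally one descends back to $\mathbb{Z}/p^n$ by a filtered-colimit-of-$E_\infty$-rings argument combined with finite flat descent for $L_{K(1)} K$ from \cite{CMNN}. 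The idea you are missing is precisely this reduction to a commutative base, which is what unlocks CMM, prismatic cohomology, and descent.
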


\begin{example}[$p$-power torsion rings] 
\label{ptorsionring}
When $A$ is $p$-power torsion, we conclude that $L_{K(1)} K(A)  = 0$. 
When $A$ is simple $p$-torsion (i.e., an $\mathbb{F}_p$-algebra), this follows
from Quillen's calculation \cite{Qui72} of the $K$-theory of finite fields, in
particular that $K(\mathbb{F}_p; \mathbb{Z}_p) \simeq
H\mathbb{Z}_p$. 
However, for $\mathbb{Z}/p^n$, one knows the  $p$-adic
$K$-theory only in a certain range \cite{Angeltveit, Brun}, so it seems
difficult to prove the result by direct computation. \end{example} 

In \cite{LMT}, Land--Meier--Tamme
give a purely homotopy-theoretic proof
of 
the result of \Cref{ptorsionring}, applying more generally to certain ring
spectra; from this \Cref{mainthmintro} is a consequence. 

Our first goal is to give an arithmetic proof of \Cref{mainthmintro}, as a
$K$-theoretic version of the \'etale comparison theorem of
\cite[Th.~9.1]{Prisms}. 
In fact, the assertion $L_{K(1)}K(\mathbb{Z}/p^n)=0$ is a
quick consequence of  recent advances in topological cyclic homology \cite{BMS2} and the theory of
prismatic cohomology \cite{Prisms}. While we do not know the $K$-theory of $\mathbb{Z}/p^n$, the work
\cite{BMS2, CMM, Prisms} leads to a relatively explicit calculation of the $K$-theory of
$\mathcal{O}_C/p^n$ via $\TC$, for $C$ the completed algebraic closure of
$\mathbb{Q}_p$ and $\mathcal{O}_C \subset C$ the ring of integers.  We can calculate directly there that the
Bott element is $p$-adically nilpotent, and then we use \cite{CMNN} to descend. 

In fact, we can obtain (via \cite{CMM}) the following consequence, which is a
$K$-theoretic version of the \'etale comparison theorem: 

\begin{corollary} 
Let $R$ be any commutative ring which is henselian along $(p)$. Then there is a natural
equivalence
$L_{K(1)} \TC(R) \simeq L_{K(1)} K(R[1/p])$. 
\end{corollary}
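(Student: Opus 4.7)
My plan is to derive the corollary by combining \Cref{mainthmintro} with a rigidity result of Clausen--Mathew--Morrow \cite{CMM} identifying $L_{K(1)} K$ with $L_{K(1)} \TC$ on $(p)$-henselian rings.

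First, I would apply \Cref{mainthmintro} to the commutative ring $R$, obtaining a canonical equivalence
\[
L_{K(1)} K(R) \simeq L_{K(1)} K(R[1/p]).
\]
This reduces the problem to producing an identification $L_{K(1)} K(R) \simeq L_{K(1)} \TC(R)$ under the hypothesis that $R$ is henselian along $(p)$.

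For this second equivalence I would invoke the main theorem of \cite{CMM}, which states that the cyclotomic trace $K(R) \to \TC(R)$ becomes an equivalence after $K(1)$-localization for any commutative ring henselian along $(p)$. If one prefers to unpack this input rather than cite it as a black box, the underlying strategy is to compare the trace-map fiber sequences associated with the pair $(R,(p))$: by the henselian-pair extension of Dundas--Goodwillie--McCarthy proved in \cite{CMM}, the fibers of $K(R) \to K(R/p)$ and $\TC(R) \to \TC(R/p)$ agree $p$-adically; meanwhile $L_{K(1)} K(R/p) = 0$ by \Cref{ptorsionring} (since $R/p$ is $p$-torsion), and the analogous vanishing $L_{K(1)} \TC(R/p) = 0$ reduces to the same statement for $K$-theory of the $\mathbb{F}_p$-algebra $R/p$ via Dundas--Goodwillie--McCarthy combined with Quillen's computation $K(\mathbb{F}_p; \mathbb{Z}_p) \simeq H\mathbb{Z}_p$.

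Chaining the two equivalences yields $L_{K(1)} \TC(R) \simeq L_{K(1)} K(R[1/p])$, as required. The main obstacle is not in the formal combination of the steps but in the rigidity input itself: since $p$ need not be nilpotent in $R$, one cannot appeal to the classical Dundas--Goodwillie--McCarthy theorem for nilpotent ideals, and the argument relies crucially on its strengthening to henselian pairs, which is the substantive content of \cite{CMM} used here.
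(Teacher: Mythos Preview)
Your proof is correct and matches the paper's approach (see the proof of \Cref{K1Rcomp}): combine \Cref{mainthmintro} with the CMM fiber square for the henselian pair $(R,(p))$, then kill the $R/p$-row after $K(1)$-localization. One small imprecision: the vanishing of $L_{K(1)} \TC(R/p)$ is not really obtained ``via Dundas--Goodwillie--McCarthy''; the cleanest justification (implicit in the paper) is that $\TC(R/p)$ is a module over $K(\mathbb{F}_p)$ through the cyclotomic trace, hence $K(1)$-acyclic by Quillen's computation.
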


Our second result 
is a type of K\"unneth formula in $K(1)$-local $K$-theory. 
In general, $K$-theory does not satisfy a K\"unneth formula: it is only a lax
symmetric monoidal, not a symmetric  monoidal functor. 
Here we show that in the special case of adding $p$-power roots of unity, 
one does have a K\"unneth formula which one can make explicit. 

To formulate the result, we recall that $\mathbb{Z}_p^{\times}$ naturally acts both on
$\mathbb{Z}[\zeta_{p^\infty}]$  and on the $p$-complete $E_\infty$-ring
$KU_{\hat{p}}$, by Galois automorphisms and Adams operations respectively. 
For a ring $R$, we write $R[\zeta_{p^\infty}] = R \otimes_{\mathbb{Z}}
\mathbb{Z}[\zeta_{p^\infty}]$. 
\begin{theorem}
\label{Kformintro}
Let $R$ be a commutative ring. 
Then there are natural, $\mathbb{Z}_p^{\times}$-equivariant equivalences of
 $E_\infty$-rings 
$$L_{K(1)} K ( R[\zeta_{p^\infty}]) \simeq (K(R) \otimes
KU_{\hat{p}})_{\hat{p}}  .$$
\end{theorem}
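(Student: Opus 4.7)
The plan is to build a $\mathbb{Z}_p^\times$-equivariant comparison map and verify it is an equivalence via $K(1)$-local Galois descent, using two Galois structures in parallel: the cyclotomic extension $R \to R[\zeta_{p^\infty}]$ on one side, and the Rognes extension $L_{K(1)} S \to KU_{\hat{p}}$ on the other.

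First I would invoke Theorem \ref{mainthmintro} to reduce to the case where $R$ is a $\mathbb{Z}[1/p]$-algebra; this is harmless since $R[\zeta_{p^\infty}][1/p] = R[1/p][\zeta_{p^\infty}]$ and the right-hand side only depends on $L_{K(1)} K(R)$. Under this assumption, $R \to R[\zeta_{p^\infty}] = \varinjlim_n R[\zeta_{p^n}]$ is a filtered colimit of finite \'etale Galois covers with continuous profinite Galois group $\mathbb{Z}_p^\times$. The compatible system of $p$-power roots of unity in $R[\zeta_{p^\infty}]$ determines a Bott element, hence an $E_\infty$-ring map $KU_{\hat{p}} \to L_{K(1)} K(R[\zeta_{p^\infty}])$ by the universal property of $KU_{\hat{p}}$; combining with the ring-induced $K(R) \to K(R[\zeta_{p^\infty}])$ produces the natural $\mathbb{Z}_p^\times$-equivariant $E_\infty$-map
$$\phi_R \colon (K(R) \otimes KU_{\hat{p}})_{\hat{p}} \to L_{K(1)} K(R[\zeta_{p^\infty}]).$$
Equivariance is built in: Galois acts on $\mu_{p^\infty}$ through the cyclotomic character, matching the Adams-operation action on the Bott element of $KU_{\hat{p}}$.

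To prove $\phi_R$ is an equivalence, I would pass to $\mathbb{Z}_p^\times$-homotopy fixed points. Since $L_{K(1)}S \to KU_{\hat{p}}$ is a $\mathbb{Z}_p^\times$-Galois extension in the $K(1)$-local world, an equivariant map between $KU_{\hat{p}}$-algebras with compatible continuous $\mathbb{Z}_p^\times$-action is an equivalence iff it is one after $(-)^{h\mathbb{Z}_p^\times}$. On the source,
$$\bigl((K(R) \otimes KU_{\hat{p}})_{\hat{p}}\bigr)^{h\mathbb{Z}_p^\times} \simeq \bigl(K(R) \otimes KU_{\hat{p}}^{h\mathbb{Z}_p^\times}\bigr)_{\hat{p}} \simeq L_{K(1)} K(R),$$
using $KU_{\hat{p}}^{h\mathbb{Z}_p^\times} \simeq L_{K(1)} S$. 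On the target, the $K(1)$-local Galois descent theorem of Clausen--Mathew--Naumann--Noel, applied to the pro-Galois cover $R \to R[\zeta_{p^\infty}]$, yields $L_{K(1)} K(R[\zeta_{p^\infty}])^{h\mathbb{Z}_p^\times} \simeq L_{K(1)} K(R)$. Naturality of $\phi_R$ then identifies the induced map on fixed points with the identity, completing the argument.

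The principal obstacle is justifying the target-side descent in the pro-\'etale generality required: CMNN provides descent at each finite level $R \to R[\zeta_{p^n}]$ with group $(\mathbb{Z}/p^n)^\times$, and passing to the pro-Galois limit requires commuting the filtered colimit $L_{K(1)} K(R[\zeta_{p^\infty}]) \simeq \varinjlim_n L_{K(1)} K(R[\zeta_{p^n}])$ with the continuous $\mathbb{Z}_p^\times$-fixed points. This should follow from the finite cohomological $p$-dimension of $\mathbb{Z}_p^\times$ together with the vanishing properties of $L_{K(1)} K$ on the finite quotients, but its precise formulation is the main technical step.
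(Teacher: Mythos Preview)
Your proposal is correct and follows essentially the same strategy as the paper: reduce to $\mathbb{Z}[1/p]$-algebras via Theorem~\ref{mainthmintro}, build the comparison map from Snaith's presentation of $KU_{\hat p}$, and verify it is an equivalence by taking $\mathbb{Z}_p^\times$-fixed points and identifying both sides with $L_{K(1)}K(R)$. The ``principal obstacle'' you flag is exactly where the paper does its technical work: rather than taking continuous $\mathbb{Z}_p^\times$-fixed points directly, the paper replaces $\mathbb{Z}_p^\times$ by the dense discrete subgroup $\mu\times\mathbb{Z}$ (Lemma~\ref{discrete2continuous}), so that on the source side $h(\mu\times\mathbb{Z})$-fixed points is a retract of a finite limit and hence commutes with $K(R)\otimes(-)$ (Proposition~\ref{enoughfixedpoints}), while on the target side one invokes \'etale \emph{hyperdescent} for $L_{K(1)}K$ from \cite{CM} rather than only the finite-level descent of CMNN.
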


\Cref{Kformintro} is related to results of Dwyer--Mitchell \cite{DM98} and
Mitchell \cite{Mit00}; our construction of the comparison map is based on the
description of Snaith \cite{Snaith} of $KU$. 
Furthermore, one can obtain an analog of this formula for any localizing invariant over
$\mathbb{Z}[1/p]$-algebras which commutes
with filtered colimits. 
Using these ideas, we also give a complete description of  $K(1)$-local
$K$-theory as an \'etale sheaf of spectra 
on $\mathbb{Z}[1/p]$-algebras (under appropriate finiteness conditions),
cf.~\Cref{K1localKthy}, yielding a spectrum-level version
of Thomason's spectral sequence from \cite{Tho85}. 

\subsection*{Acknowledgments}
We thank Lars Hesselholt, Jacob Lurie, and Peter Scholze for helpful discussions. 
We are also grateful to the anonymous referee for their helpful comments on the first version of this paper. 
The third author would like to thank the University of Copenhagen for its
hospitality during which some of this work was done. 
The first author was supported by NSF grant \#1801689 as well as grants from the Packard and Simons Foundations.  The second author was supported by Lars Hesselholt's Niels Bohr professorship, the University of Bonn, and the Max Planck Institute for Mathematics.
This work was done while the third author was a Clay Research Fellow. 

\section{Proof of \Cref{mainthmintro}}
\subsection{$\delta$-ring calculations}

In this section, we prove  a simple nilpotence result
(\Cref{nilpotencelemma}).\footnote{As in \Cref{useofetalecomp} below, one could
replace its use below with that of the \'etale comparison theorem of
\cite{Prisms}.} 
We freely use the theory of $\delta$-rings introduced in
\cite{Joy85}.\footnote{$\delta$-rings also arise as the natural
structure on the homotopy groups of $K(1)$-local $E_\infty$-ring spectra (where
they are often called $\theta$-algebras or Frobenius algebras), cf.~\cite{Ho14}. We will not use this fact here.} 
Given a $\delta$-ring $(R, \delta)$, we let $\phi: R \to R$ be the map $\phi(x)
= x^p + p \delta(x)$, so that $\phi$ is a ring homomorphism. 
We recall the basic formulas 
\begin{gather} \delta(ab) = a^p \delta(b) + b^p \delta(a) + p \delta(a) \delta(b)  = 
\phi(a) \delta(b) + \delta(a)b^p  , \label{ea} \\
\delta(a + b) = \delta(a) + \delta(b) - \sum_{0 < i < p}  \frac{1}{p}
\binom{p}{i} a^i b^{p-i} \label{eb}
\end{gather} for $a, b
\in R$. 

Let $R$ be a $p$-complete $\delta$-ring. 
In \cite[Def.~2.19]{Prisms}, the crucial notion of a distinguished element
is introduced: an
element $x \in R$ is called \emph{distinguished} if $\delta(x)$ is a unit. 
For example, the element $p$ is always distinguished. 
Here we use the following generalization. 

\begin{definition} 
An element $x$ of a $p$-complete $\delta$-ring $R$ is called
\emph{weakly $k$-distinguished} if
$(x, \delta(x), \dots, \delta^k(x))$ is the unit ideal.
\end{definition} 
\begin{example} 
The element $p^k$ is weakly $k$-distinguished  in any $p$-complete $\delta$-ring. 
It suffices to check this in $\mathbb{Z}_{p}$. 
Indeed, the formula $\delta(x) = \frac{x - x^p}{p}$ (valid for $x
\in \mathbb{Z}_p$) shows easily that 
if the $p$-adic valuation $v_p(x)$ is positive, then $v_p(\delta(x)) = v_p(x) -
1$. Inductively, we thus get that $v_p( \delta^k( p^k)) = 0$, so $p^k$ is
weakly $k$-distinguished. 
\end{example} 

\begin{definition} 
Let $R$ be a $\delta$-ring. 
Let $I \subset R$ be an ideal. We define $\delta(I)$ as the ideal generated by
$\delta(x), x \in I$. 
\end{definition} 

\begin{example} 
Suppose $I = (x)$. Then $\delta(I) \subset (x, \delta(x))$. 
More generally, if $I \subset R$ is an ideal generated by elements $(f_1, \dots, f_n)$, then 
\begin{equation}  \label{deltaoffgideal} \delta(I) \subset (f_1, \dots, f_n, \delta(f_1),\dots,
\delta(f_n)).  \end{equation}
This follows easily from the formulas \eqref{ea} and  \eqref{eb} above. 
\end{example} 

\begin{proposition}[Nilpotence criterion]
\label{nilpotencelemma}
Let $R$ be a $\delta$-ring, and let $x,y \in R$. Suppose $R$ is $(p, x)$-adically
complete and we have the equation $x y = p^k$. Then $y$ is weakly
$(k-1)$-distinguished and $x$ is $p$-adically
nilpotent.  
\end{proposition}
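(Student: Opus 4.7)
The plan is to analyze $xy = p^k$ by iteratively applying $\delta$ and reducing modulo the ascending chain of ideals $I_j := (y, \delta(y), \dots, \delta^j(y))$. The main input is a slight strengthening of the Example: not only is $p^k$ weakly $k$-distinguished, but in fact $\delta^k(p^k) \in R^\times$, because the corresponding element in $\mathbb{Z}_p$ already has $v_p = 0$, and $(p,x)$-adic completeness forces $(p,x) \subset \mathrm{rad}(R)$, so units of $\mathbb{Z}_p$ remain units in $R$.

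I would first prove the key congruence
\[
\delta^j(xy) \equiv \phi^j(x)\,\delta^j(y) \pmod{I_{j-1}}
\]
by induction on $j$. The base case $j=0$ is trivial. For the inductive step, write $\delta^{j-1}(xy) = \phi^{j-1}(x)\,\delta^{j-1}(y) + r$ with $r \in I_{j-2}$ and apply $\delta$: the product formula \eqref{ea} yields
\[
\delta\bigl(\phi^{j-1}(x)\,\delta^{j-1}(y)\bigr) = \phi^j(x)\,\delta^j(y) + \delta(\phi^{j-1}(x))\,\delta^{j-1}(y)^p,
\]
whose trailing term lies in $(\delta^{j-1}(y)) \subset I_{j-1}$; the binomial correction from the sum formula \eqref{eb} lies in $I_{j-1}$ as well; and $\delta(r) \in \delta(I_{j-2}) \subset I_{j-1}$ by \eqref{deltaoffgideal}.

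Specializing to $j = k$ and substituting $xy = p^k$,
\[
(\text{unit}) = \delta^k(p^k) \equiv \phi^k(x)\,\delta^k(y) \pmod{I_{k-1}}.
\]
Since $\phi$ preserves $(p,x)$ (as $\phi(p)=p$ and $\phi(x)=x^p+p\,\delta(x)\in(p,x)$), the right-hand side lies in $(p,x)+I_{k-1}$, so $(p,x)+I_{k-1} = R$. Nakayama, applicable by $(p,x)$-adic completeness, upgrades this to $I_{k-1} = R$, i.e., $y$ is weakly $(k-1)$-distinguished.

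For the $p$-adic nilpotence of $x$, I would reduce modulo $p$. There $\phi$ is the Frobenius, so $\bar{\phi}^j(\bar{x}) = \bar{x}^{p^j}$; moreover $\delta^j(p^k) \in pR$ for $j < k$, so the congruence descends to $\bar{x}^{p^j}\,\bar{\delta^j(y)} \in \bar{I}_{j-1}$ in $R/p$ for $0 \le j \le k-1$. Starting from $\bar{x}\bar{y} = 0$ and inductively multiplying each successive relation by a high enough power of $\bar{x}$ to absorb the lower generators, one obtains $\bar{x}^{M_j}\,\bar{\delta^j(y)} = 0$ with $M_j = 1 + p + \cdots + p^j$. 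Multiplying the identity $1 = \sum_{i<k} \bar{b}_i\,\bar{\delta^i(y)}$ (from the already-proven $I_{k-1} = R$) by $\bar{x}^{M_{k-1}}$ then yields $\bar{x}^{M_{k-1}} = 0$ in $R/p$, i.e., $x^{M_{k-1}} \in pR$, so $x$ is topologically nilpotent in the $p$-adic topology.

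The main obstacle is the careful bookkeeping in the inductive congruence: keeping all errors in $I_{j-1}$ rather than letting them leak into $I_j$ is precisely what makes the conclusion weakly $(k-1)$-distinguished, rather than merely the softer weakly $k$-distinguished statement one gets cheaply from $\delta^j(xy)\in I_j$.
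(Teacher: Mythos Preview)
Your proof is correct. The argument for the first claim, that $y$ is weakly $(k-1)$-distinguished, is essentially identical to the paper's: the congruence $\delta^j(xy)\equiv\phi^j(x)\,\delta^j(y)\pmod{I_{j-1}}$ is exactly the inductive containment \eqref{deltai} in the paper, and the endgame (use that $\delta^k(p^k)$ is a unit, then eliminate $\phi^k(x)$ via the Jacobson radical) matches as well.

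The argument for $p$-adic nilpotence of $x$ differs genuinely from the paper's. The paper localizes: it forms the $p$-completion $R'$ of $R[1/x]$, observes that there $y$ becomes a unit multiple of $p^k$, so $(y,\delta(y),\dots,\delta^{k-1}(y))\subset(p)$; but this ideal is also all of $R'$ by the first part, forcing $R'=0$. You instead stay in $R$ and reduce modulo $p$: from the congruences for $j<k$ (where $\delta^j(p^k)\in pR$) you extract explicit relations $\bar x^{p^j}\,\overline{\delta^j(y)}\in\bar I_{j-1}$, and then climb inductively to $\bar x^{M_{k-1}}\,\overline{\delta^j(y)}=0$ for all $j<k$; combining with the unit decomposition $1=\sum_{j<k}\bar b_j\,\overline{\delta^j(y)}$ gives $\bar x^{M_{k-1}}=0$. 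Your approach is more hands-on and yields the effective bound $x^{(p^k-1)/(p-1)}\in pR$, which the paper's localization argument does not directly produce; the paper's route is shorter and avoids the bookkeeping of tracking exponents, at the cost of that quantitative information.
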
 

\begin{proof}
We first claim that $y$ is weakly $(k-1)$-distinguished. 
Indeed, consider the ideal $(p^k) = (xy)$. 
We claim that for each $i \geq 1$, we have that 
\begin{equation}  \delta^i ( p^k) \in 
( \phi^i(x) \delta^i(y), \delta^{i-1}(y), \dots, y). \label{deltai}
\end{equation}
To see this, we use induction on $i$. 
For $i = 1$, we have
$\delta(xy) = \phi(x) \delta(y) + \delta(x) y^p$, as desired. 
If we have proven \eqref{deltai} for a given  $i$, then we can apply 
$\delta$ to both sides and use \eqref{deltaoffgideal}
to conclude the result for $i + 1$, together with $\delta( \phi^i(x) \delta^i(y))
= \phi^{i+1}(x) \delta^{i+1}(y) + \delta(\phi^i(x)) \delta^i(y)^p$. 
By induction on $i$, this proves 
\eqref{deltai} in general. 

Taking $i = k$ in \eqref{deltai} and using that $\delta^k(p^k)$ is a unit, we find that  
$\phi^k(x) \delta^k (y), \delta^{k-1}(y), \dots, y$ generate the unit ideal in
$R$. 
But since $\phi^k(x) $ is contained in the Jacobson radical of
$R$ (as $R$ is $(p, x)$-adically
complete and $\phi^k(x) \equiv x^{p^k}$ modulo $p$),  we conclude that $\delta^{k-1}(y), \dots, y$ generate the unit ideal of
$R$. Thus, $y$ is weakly $(k-1)$-distinguished. 

Finally, we must show that $x$ is $p$-adically nilpotent. 
Consider the $p$-adic completion $R'$ of $R[1/x]$; this is also a $p$-complete
$\delta$-ring, and it suffices to show that $R' = 0$. But the image of $y$ in
$R'$ is both a unit multiple of $p^k$ and weakly $(k-1)$-distinguished, so the
ideal $(y, \delta(y), \dots, \delta^{k-1}(y)) $ is both contained in $(p)$ and the unit ideal. 
This now shows that $R' = 0$ as desired. 
\end{proof} 
\subsection{The vanishing result for $L_{K(1)}\TP(\mathcal{O}_C/p^n)$}
In this subsection, we 
let $C$ be the completion of the algebraic closure of $\mathbb{Q}_p$, let $\mathcal{O}_C$ be its ring of integers, and let $\ainf$ denote Fontaine's period ring, with its canonical surjective map $\theta: \ainf \to \mathcal{O}_C$.  The kernel
of $\theta$ is generated by a nonzerodivisor, a choice of which we denote $d$. With respect to the unique $\delta$-structure
on $\ainf$, $d$ is a distinguished element and
$(\ainf, (d))$ is the perfect prism corresponding to the integral perfectoid
ring $\mathcal{O}_C$, \cite[Th.~3.10]{Prisms} and \cite[Sec.~3]{BMS1}. 

We can fix such a $d$ as follows. Consider a system $(1, \zeta_p, \zeta_{p^2}, \dots)$ of compatible primitive $p$-power roots of unity in $\mathcal{O}_C$ and let $\epsilon$ denote the corresponding element in $\mathcal{O}_C^\flat = \varprojlim_{\operatorname{Frob}} \mathcal{O}_C/p$.  Then we can take $d$ to be the element
$$d = \frac{[\epsilon] - 1}{[\epsilon^{1/p}] - 1} \in \ainf = W(\mathcal{O}_C^\flat).$$
It is well-known that this choice of $d$ generates the kernel of $\theta$.   See \cite[Sec. 3]{BMS1} for a treatment of all of these facts. 

Next we recall the calculation of topological Hochschild invariants of
$\mathcal{O}_C$, using the notation and language of \cite{NS18}. 
\begin{proposition}[{Hesselholt \cite{Hes06}, Bhatt--Morrow--Scholze \cite[Sec. 6]{BMS2}}] 
We can choose isomorphisms \[ \TC^-(\mathcal{O}_C; \mathbb{Z}_p) \simeq \ainf[u, v]/(uv - d), \quad 
\TP(\mathcal{O}_C; \mathbb{Z}_p) \simeq \ainf[\sigma^{\pm 1}], \quad |u| = 2,
|v| = -2, |\sigma| = 2,
\]
such that the canonical map is the identity on $\ainf$ and carries 
$v \mapsto \sigma^{-1}, u \mapsto d \cdot \sigma$ and the cyclotomic Frobenius map is the
Frobenius on $\ainf$ and carries $u \mapsto \sigma$. 
\end{proposition}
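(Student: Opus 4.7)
The plan is to use the motivic filtration constructed in \cite[Sec.~6]{BMS2} on $\TC^-(-;\mathbb{Z}_p)$ and $\TP(-;\mathbb{Z}_p)$. For a perfectoid ring $R$ with associated perfect prism $(A,(d))$, this filtration has graded pieces identified with shifts of Nygaard-completed and un-completed Breuil--Kisin twisted prismatic cohomology: the $n$-th piece is $\mathcal{N}^{\geq n}\Prism_R\{n\}[2n]$ on $\TC^-$ and $\Prism_R\{n\}[2n]$ on $\TP$. For $R = \mathcal{O}_C$, the prismatic cohomology $\Prism_{\mathcal{O}_C}$ is concentrated in degree zero and equals $\ainf$; the Nygaard filtration coincides with the $d$-adic filtration; and the Breuil--Kisin twists $\ainf\{n\}$ are free $\ainf$-modules of rank one, trivialized by a compatible choice of generators built from the Bott-type class associated to the roots of unity $(\zeta_{p^n})$ in $\mathcal{O}_C$.

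First I would check that the motivic spectral sequences degenerate for degree reasons, leaving only the associated graded. Fixing a generator $\sigma$ of $\ainf\{1\}$ in homological degree $2$, this yields $\pi_* \TP(\mathcal{O}_C;\mathbb{Z}_p) = \ainf[\sigma^{\pm 1}]$. For $\TC^-$, the $n$-th graded piece is $d^n\ainf$ for $n \geq 0$ and $\ainf$ for $n < 0$; defining $u = d\sigma$ in degree $2$ and $v = \sigma^{-1}$ in degree $-2$ gives the presentation $\ainf[u,v]/(uv-d)$. The canonical map $\TC^- \to \TP$ is induced on graded pieces by the inclusion $\mathcal{N}^{\geq n}\ainf = d^n\ainf \hookrightarrow \ainf$, which immediately recovers $u \mapsto d\sigma$ and $v \mapsto \sigma^{-1}$.

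Second, the cyclotomic Frobenius on $\TC^-(\mathcal{O}_C;\mathbb{Z}_p)$ is identified via the motivic filtration with the combination of the Witt-vector Frobenius $\phi$ on $\ainf$ and the canonical Frobenius trivialization of Breuil--Kisin twists available for perfect prisms (this is what distinguishes the Frobenius from the canonical map and accounts for the missing factor of $d$). With the correct normalization of $\sigma$, this Frobenius sends $u$ to $\sigma$. The main obstacle is the careful coordination of the choice of generators: $\sigma$ must be chosen so that the Breuil--Kisin trivialization is compatible simultaneously with the Nygaard filtration (yielding $uv = d$), with the canonical map ($u \mapsto d\sigma$), and with the cyclotomic Frobenius ($u \mapsto \sigma$). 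This amounts to explicitly linking the topological Bott element with the element $d = ([\epsilon]-1)/([\epsilon^{1/p}]-1)$ built from the chosen roots of unity, as is done in \cite[Sec.~6]{BMS2}, building on Hesselholt's TR-theoretic computation in \cite{Hes06}.
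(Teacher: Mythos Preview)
The paper does not prove this proposition; it is stated with attribution to Hesselholt \cite{Hes06} and Bhatt--Morrow--Scholze \cite[Sec.~6]{BMS2} and used as a black box. So there is no proof in the paper to compare against.

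Your proposal, however, has a circularity problem. For a perfectoid ring such as $\mathcal{O}_C$, the motivic filtration of \cite{BMS2} on $\TC^-$ and $\TP$ is \emph{defined} as the double-speed Postnikov filtration, and the identification of its graded pieces with (Nygaard-filtered) prismatic cohomology is obtained only after one has already computed $\THH_*(\mathcal{O}_C;\mathbb{Z}_p)$, $\TC^-_*(\mathcal{O}_C;\mathbb{Z}_p)$, and $\TP_*(\mathcal{O}_C;\mathbb{Z}_p)$ directly. In other words, the proposition you are trying to prove is an \emph{input} to the construction and identification of the motivic filtration, not an output of it. Invoking the motivic filtration to ``prove'' the proposition therefore begs the question.

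The actual argument in \cite[Sec.~6]{BMS2} runs as follows: first compute $\THH_*(\mathcal{O}_C;\mathbb{Z}_p) \simeq \mathcal{O}_C[u]$ with $|u|=2$ via B\"okstedt's theorem and flat base change from $\mathbb{F}_p$ through $\mathcal{O}_C/p$ to $\mathcal{O}_C$; then analyze the $S^1$-homotopy fixed point and Tate spectral sequences, whose differentials are determined by the element $d$ (this is where Hesselholt's earlier TR-theoretic work enters, or alternatively one uses the Segal conjecture for $\THH(\mathcal{O}_C)$ to pin down the cyclotomic Frobenius). Only once these spectral sequences are run does one obtain the ring structures in the proposition and identify $\pi_0$ with $\ainf$. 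Your description of how the canonical and Frobenius maps act is correct once the homotopy groups are known, but the homotopy-group computation itself must be carried out by these direct means, not by appealing to the motivic filtration.
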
 

\begin{remark} 
In degree zero, the above isomorphisms are canonical. 
However, in nonzero degrees, they are not canonical; for
example, they are not Galois-equivariant.  The canonical form of the above
proposition involves the so-called Breuil--Kisin twists as in \cite{BMS2}. 
\end{remark} 

\begin{construction}[$K(1)$-localization explicitly]
Recall from \cite[Lemma 3.1]{Niziol} or \cite[Lemma 1.3.7]{HN} that the
localization sequence shows $K( \mathcal{O}_C;
\mathbb{Z}_p)\overset{\sim}{\rightarrow} K(C;\mathbb{Z}_p)$, and Suslin's
rigidity theorem \cite{Suslin} shows that the latter is isomorphic to
$ku_{\widehat{p}}$ (i.e., $p$-complete connective topological $K$-theory) as a
ring spectrum by choosing any ring isomorphism $C\cong \mathbb{C}$.  The
$K(1)$-localization of $ku$ is implemented by inverting the generator in degree
2 and then $p$-completing, as is clear from the definition.  It follows that the
$K(1)$-localization of $K( \mathcal{O}_C; \mathbb{Z}_p)$, or more generally of
any $p$-complete $K(\mathcal{O}_C; \mathbb{Z}_p)$-module $M$, can be obtained in the analogous way:
$$L_{K(1)}M = M[\beta^{-1}]_{\widehat{p}},$$
where $\beta\in \pi_2K(\mathcal{O}_C;\mathbb{Z}_p)\cong \mathbb{Z}_p$ is any generator.
\end{construction}

Next we trace this into $\TP$, where  
one can identify the image of the cyclotomic trace. 
\begin{proposition}[{\cite[Th.~1.3.6]{HN}}] 
With respect to the above identifications, the cyclotomic trace
$K_*(\mathcal{O}_C; \mathbb{Z}_p) \to \TP_*(\mathcal{O}_C; \mathbb{Z}_p)$ carries $\beta$ to a $\mathbb{Z}_p^{\times}$-multiple of
$([\epsilon] - 1) \sigma$.
\end{proposition}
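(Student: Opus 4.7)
The plan is to exhibit an explicit generator of $\pi_2 K(\mathcal{O}_C;\mathbb{Z}_p)\cong\mathbb{Z}_p$ built from the compatible system $(\zeta_{p^n})$ of $p$-power roots of unity and then trace it through to $\TP(\mathcal{O}_C;\mathbb{Z}_p)$. Since the source is rank one, any such class will automatically agree with $\beta$ up to a $\mathbb{Z}_p^\times$-multiple, which is precisely the precision claimed.

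First I would construct a candidate $\beta'$. The chosen system defines an embedding $\mathbb{Z}_p(1)=T_p\mu\hookrightarrow \mathcal{O}_C^\times=K_1(\mathcal{O}_C)$. Snaith's description of $KU$ combined with the Suslin rigidity isomorphism $K(\mathcal{O}_C;\mathbb{Z}_p)\simeq ku_{\widehat p}$ then shows that a twofold delooping of this embedding produces a class $\beta'\in \pi_2 K(\mathcal{O}_C;\mathbb{Z}_p)$. To verify $\beta'$ is a generator I would fix an abstract isomorphism $C\cong\mathbb{C}$ and compare with the classical Bott class in $\pi_2 ku$, which is built from $S^1\subset\mathbb{C}^\times$ in exactly the same fashion; matching $\mathbb{Z}_p(1)$ with the $p$-adic Tate module of $S^1$ gives the identification up to a unit.

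Next I would trace $\beta'$ through $K(\mathcal{O}_C)\to \TP(\mathcal{O}_C;\mathbb{Z}_p)$. By naturality, the image in $\TP_2$ equals the image of the canonical generator of $\pi_2 B^2\mathbb{Z}_p(1)$ under the composite $B^2\mathbb{Z}_p(1)\to K(\mathcal{O}_C)\to \TP(\mathcal{O}_C;\mathbb{Z}_p)$. Factoring the target through the units $B^2\mathrm{gl}_1\TP(\mathcal{O}_C;\mathbb{Z}_p)^{\wedge}_p$, this reduces to describing how $\zeta_{p^n}\in\mathcal{O}_C^\times$ acts multiplicatively on $\TP$. On $\pi_0\TP(\mathcal{O}_C;\mathbb{Z}_p)=\ainf$ the unit $\zeta_{p^n}$ lifts to its Teichm\"uller representative $[\epsilon^{1/p^n}]$, so that in the limit the twice-delooped class reads as the $p$-adic logarithm $\log[\epsilon]\cdot\sigma\in\TP_2$. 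Since $[\epsilon]-1$ is topologically nilpotent in $\ainf$, the series $\log[\epsilon]=([\epsilon]-1)-\tfrac12([\epsilon]-1)^2+\cdots$ converges to $([\epsilon]-1)\cdot w$ with $w\in\ainf^\times$, yielding the image $([\epsilon]-1)\sigma$ up to a $\mathbb{Z}_p^\times$-multiple.

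The hard part will be making this last step precise: identifying the cyclotomic trace on a unit of $\mathcal{O}_C$ with its Teichm\"uller lift in $\pi_0\TP$, and then matching the twice-delooped class with the $p$-adic logarithm times $\sigma$. This requires careful use of the Bokstedt--Nikolaus circle-equivariant structure on $\THH$ underlying the BMS identification $\TP(\mathcal{O}_C;\mathbb{Z}_p)=\ainf[\sigma^{\pm 1}]$, and is where the bulk of the work in \cite[Th.~1.3.6]{HN} is concentrated.
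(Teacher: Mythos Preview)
The paper does not prove this proposition at all: it is simply quoted from \cite[Th.~1.3.6]{HN} and used as a black box. There is therefore no ``paper's own proof'' to compare your proposal against. Your outline is a reasonable sketch of the strategy in the cited reference, and you correctly identify that the substantive work (matching the trace of a unit with its Teichm\"uller lift, and relating the twice-delooped class to the $p$-adic logarithm) is precisely what Hesselholt--Nikolaus carry out.

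One small gap to flag: your logarithm argument produces $\log[\epsilon]=([\epsilon]-1)\cdot w$ with $w\in\ainf^{\times}$, but the proposition as stated asks for a $\mathbb{Z}_p^{\times}$-multiple, which is strictly stronger. You have not explained why $w$ lies in $\mathbb{Z}_p^{\times}\subset\ainf^{\times}$, and in fact it does not (the higher terms of the series involve nontrivial elements of $\ainf$). The correct resolution is that the generator $\sigma$ is itself only pinned down up to a unit of $\ainf$ (cf.\ the Remark immediately following the proposition), so the statement should really be read as ``up to a graded unit of $\TP_*(\mathcal{O}_C;\mathbb{Z}_p)$''. That weaker form is exactly what you establish, and it is exactly what the paper actually uses in the proof of \Cref{K1localTP}, where the phrasing is ``a graded unit times the class of $[\epsilon]-1$''.
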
 

Let $R$ be a quasiregular semiperfectoid $\mathcal{O}_C$-algebra (in the sense
of \cite[Sec.~4]{BMS2}), e.g., the quotient of a perfectoid by a regular
sequence. 
Then one can construct  \cite[Sec.~7]{Prisms}
a $(p, d)$-adically complete and $d$-torsion-free $\delta$-ring $\Prism_R$, which receives a canonical map from $\ainf$, 
and a map 
$R \to \Prism_R/(d)$; moreover, $\Prism_R$ is universal for this structure. 
The ring $\Prism_R$ is equipped with the Nygaard filtration (also defined in loc.~cit.) 
whose completion is denoted $\widehat{\Prism}_R$, and acquires a
$\delta$-structure itself. 
Our primary tool in this paper, which connects algebraic $K$-theory (or rather 
$\TP$) and $\delta$-rings, is the following result. 

\begin{theorem}[{\cite{BMS2} and  \cite[Sec.~13]{Prisms}}] 
For a quasiregular semiperfectoid $\mathcal{O}_C$-algebra $R$, 
$\TP_*(R; \mathbb{Z}_p)$ is concentrated in even degrees, is 2-periodic, and 
there is a canonical isomorphism
$\pi_0 \TP(R; \mathbb{Z}_p) \simeq \widehat{\Prism}_R$. 
\end{theorem}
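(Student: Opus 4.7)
The plan is to invoke the motivic filtration on $\THH(R;\mathbb{Z}_p)$ and its cyclotomic variants constructed in \cite{BMS2} and refined in \cite[Sec.~13]{Prisms}. For any $p$-complete ring, one obtains complete, exhaustive, descending filtrations on $\THH(R;\mathbb{Z}_p)$, $\TC^-(R;\mathbb{Z}_p)$, and $\TP(R;\mathbb{Z}_p)$; the key input we take from loc.~cit.\ is that the $i$-th graded piece of the $\TP$-filtration is canonically $\widehat{\Prism}_R\{i\}[2i]$, where $\{i\}$ denotes the Breuil--Kisin twist.

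The first step is to verify that, under the quasiregular semiperfectoid hypothesis on $R$, each $\widehat{\Prism}_R\{i\}$ is a discrete module (concentrated in cohomological degree $0$). When $R$ is realized as a quotient of a perfectoid ring by a regular sequence, $\Prism_R$ admits an explicit description as a prismatic envelope which is a $(p,d)$-complete, $d$-torsion-free commutative ring sitting in degree $0$; the Nygaard completion preserves this property, and the Breuil--Kisin twists $\{i\}$ are invertible modules (locally free of rank one) and hence also discrete.

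Consequently, each graded piece $\widehat{\Prism}_R\{i\}[2i]$ of $\TP(R;\mathbb{Z}_p)$ contributes only to $\pi_{2i}$, so the filtration spectral sequence degenerates for degree reasons. This immediately yields even concentration. Two-periodicity then follows by choosing any generator of the invertible $\widehat{\Prism}_R$-module $\widehat{\Prism}_R\{1\}$, which produces a unit in $\pi_2 \TP(R;\mathbb{Z}_p)$. For the $\pi_0$-computation, only the $i=0$ graded piece contributes, and that piece is canonically $\widehat{\Prism}_R$ (the twist being trivial in degree zero), yielding the asserted natural isomorphism.

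The main obstacle is of course the deeper input from \cite{BMS2} and \cite[Sec.~13]{Prisms}: constructing the motivic filtration on $\TP$ and identifying its graded pieces with Nygaard-completed prismatic cohomology. The strategy there is to first build the filtration on quasiregular semiperfectoid rings via the double-speed Postnikov filtration on $\THH$, then match the Nygaard filtration on $\pi_0 \TC^-$ with the prismatic Nygaard filtration via the universal property of the prismatic envelope, and finally extend to general $p$-complete rings by descent. The argument above simply packages the output of this machinery under the quasiregular semiperfectoid hypothesis.
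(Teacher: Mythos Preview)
The paper does not supply a proof of this statement: it is quoted as a theorem of \cite{BMS2} and \cite[Sec.~13]{Prisms} and used as a black box. Your proposal is a reasonable summary of how those references establish the result, and the logic you outline (motivic/double-speed Postnikov filtration on $\TP$, identification of graded pieces with $\widehat{\Prism}_R\{i\}[2i]$, discreteness of $\widehat{\Prism}_R$ for quasiregular semiperfectoid $R$, and the resulting degeneration) is correct in outline.

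Two small points worth tightening. First, your justification of discreteness only treats the case where $R$ is a quotient of a perfectoid by a regular sequence; the general quasiregular semiperfectoid case requires the slightly more general argument in \cite{BMS2} (evenness of $\THH_*(R;\mathbb{Z}_p)$, whence evenness of $\TC^-_*$ and $\TP_*$ via the homotopy fixed point and Tate spectral sequences). Second, for the 2-periodicity you should explain why the invertible module $\widehat{\Prism}_R\{1\}$ is actually free of rank one: this uses that $R$ is an $\mathcal{O}_C$-algebra, so the Breuil--Kisin twist is pulled back from $\ainf$, where it is (non-canonically) trivial. Equivalently, $\TP(R;\mathbb{Z}_p)$ is a module over $\TP(\mathcal{O}_C;\mathbb{Z}_p)\simeq \ainf[\sigma^{\pm 1}]$, and multiplication by the unit $\sigma$ gives the periodicity.
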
 

Using this, we can give a 
direct description of the $K(1)$-localization of $\TP$ in terms of
$\widehat{\Prism}$. 
\begin{corollary} 
\label{K1localTP}
For a quasiregular semiperfectoid $\mathcal{O}_C$-algebra $R$, 
there is a canonical isomorphism
$\pi_0 (L_{K(1)}\TP(R)) \simeq (\widehat{\Prism}_R[1/d])_{\hat{p}}$. 
\end{corollary}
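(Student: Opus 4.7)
The plan is to apply the explicit description $L_{K(1)}M = M[\beta^{-1}]_{\hat{p}}$ recalled above to $M = \TP(R;\mathbb{Z}_p)$, which is a module spectrum over $K(\mathcal{O}_C;\mathbb{Z}_p)$ via the cyclotomic trace, and then to rewrite the result in terms of $\widehat{\Prism}_R$ and $d$.

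First, I would use the two preceding propositions to compute $\pi_0$ of $\TP(R;\mathbb{Z}_p)[\beta^{-1}]$. Writing $\mu := [\epsilon]-1$, the image of $\beta$ in $\pi_2\TP(R;\mathbb{Z}_p) = \widehat{\Prism}_R\cdot\sigma$ is $u\mu\sigma$ for some $u\in\mathbb{Z}_p^\times$; since $\sigma$ is already a unit by $2$-periodicity, inverting $\beta$ on $\TP(R;\mathbb{Z}_p)$ is equivalent, on the level of $\pi_0$, to inverting $\mu\in\widehat{\Prism}_R$. The result is again concentrated in even degrees, so $p$-completion commutes with $\pi_0$, giving
$$\pi_0 L_{K(1)}\TP(R) \;\simeq\; \bigl(\widehat{\Prism}_R[1/\mu]\bigr)_{\hat{p}}.$$

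The remaining and main step is to identify $(\widehat{\Prism}_R[1/\mu])_{\hat{p}}$ with $(\widehat{\Prism}_R[1/d])_{\hat{p}}$. The key is the identity $\mu = d\cdot\phi^{-1}(\mu)$ in $\ainf = W(\mathcal{O}_C^\flat)$, where $\phi^{-1}(\mu) = [\epsilon^{1/p}]-1$; this passes to $\widehat{\Prism}_R$ by functoriality. One direction is immediate: inverting $\mu$ makes both $d$ and $\phi^{-1}(\mu)$ units. For the converse, I would argue that $\phi^{-1}(\mu)$ is already a unit in $\ainf[1/d]_{\hat{p}}$, and therefore also in the algebra $\widehat{\Prism}_R[1/d]_{\hat{p}}$. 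Reducing modulo $p$ identifies $\ainf[1/d]/p$ with $\mathcal{O}_C^\flat[\bar d^{-1}]$, which equals $C^\flat$ because $\mathcal{O}_C^\flat$ is a rank-one valuation ring and $\bar d$ has positive valuation; there, $\phi^{-1}(\mu)$ reduces to $\epsilon^{1/p}-1\neq 0$, a unit in the field $C^\flat$, and $p$-adic Nakayama lifts this to a unit in $\ainf[1/d]_{\hat{p}}$. This last comparison of localizations is the main obstacle; the rest is unwinding the previously stated computations of $\TP(R;\mathbb{Z}_p)$.
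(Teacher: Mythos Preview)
Your proof is correct and follows essentially the same approach as the paper: identify $L_{K(1)}$ with inverting the image of the Bott element, trace $\beta$ to $\mu=[\epsilon]-1$ in $\pi_0$, and then argue that inverting $\mu$ and inverting $d$ coincide after $p$-completion. The only minor difference is in this last comparison: the paper uses the mod-$p$ congruences $\mu\equiv(\phi^{-1}(\mu))^p$ and $d\equiv(\phi^{-1}(\mu))^{p-1}$ directly to see that both localizations equal inverting $\phi^{-1}(\mu)$ mod $p$, whereas you use the exact factorization $\mu=d\cdot\phi^{-1}(\mu)$ together with the observation that $\ainf[1/d]/p\simeq C^\flat$ to deduce that $\phi^{-1}(\mu)$ is already a unit in $(\ainf[1/d])_{\hat p}$.
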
 
\begin{proof} 
The spectrum $L_{K(1)} \TP(R)$ is obtained by inverting (in the $p$-complete
category) the image of the Bott
element from $K_*(\mathcal{O}_C; \mathbb{Z}_p)$ via the trace map. 
As we saw, the map 
$K_*(\mathcal{O}_C; \mathbb{Z}_p) \to \TP_*(\mathcal{O}_C; \mathbb{Z}_p)$
carries the class of $\beta$ to a graded unit times the class of $[\epsilon] - 1
\in \ainf$.  However, in $\ainf$ we have $[\epsilon] - 1 \equiv ([\epsilon^{1/p}] - 1)^p$
(modulo $p$) and $d \equiv ([\epsilon^{1/p}] - 1)^{p-1}$ (modulo $p$); thus,
inverting either $[\epsilon] -1 $ or $d$ in the $p$-complete sense is the same
operation, completing the proof.
\end{proof} 

Finally, we can conclude the main 
vanishing result that was the goal of this section. 

\begin{corollary} 
\label{K1TPvanishes}
For each $n$, we have that $L_{K(1)} (\TP( \mathcal{O}_C/p^n)) = 0$. 
\end{corollary}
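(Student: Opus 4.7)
The plan is to combine \Cref{K1localTP} with the Nilpotence Criterion \Cref{nilpotencelemma}, applied to the quasiregular semiperfectoid $\mathcal{O}_C$-algebra $R=\mathcal{O}_C/p^n$ (which is the quotient of a perfectoid by the regular element $p^n$). By the theorem preceding \Cref{K1localTP}, $\TP(R;\mathbb{Z}_p)$ is concentrated in even degrees and $2$-periodic, and these properties are inherited by $L_{K(1)}\TP(R)$, which is obtained from $\TP(R;\mathbb{Z}_p)$ by inverting a class in degree $2$ and $p$-completing. It therefore suffices to prove $\pi_0 L_{K(1)}\TP(R)=(\widehat{\Prism}_R[1/d])_{\hat{p}}=0$.

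The key input is the divisibility relation $p^n\in d\cdot\widehat{\Prism}_R$, which I expect to fall out of the universal property of $\Prism_R$. By construction, $\Prism_R$ is the initial $(p,d)$-adically complete, $d$-torsion-free $\delta$-ring equipped with a map from $\ainf$ and a compatible map $R\to\Prism_R/(d)$. The composite $\ainf\to\Prism_R\to\Prism_R/(d)$ must factor through $\ainf/(d)=\mathcal{O}_C$ and then through $R=\mathcal{O}_C/p^n$, forcing $p^n$ to vanish in $\Prism_R/(d)$. Hence $p^n=d\cdot y$ for some $y\in\Prism_R$, and a fortiori in $\widehat{\Prism}_R$.

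With this in hand, I apply \Cref{nilpotencelemma} to the $(p,d)$-adically complete $\delta$-ring $\widehat{\Prism}_R$, taking the distinguished pair of elements to be $(d, y)$ with product $p^n$: the conclusion that $d$ is $p$-adically nilpotent is precisely the statement $(\widehat{\Prism}_R[1/d])_{\hat{p}}=0$. Combined with \Cref{K1localTP} and the even/periodic reduction above, this gives $L_{K(1)}\TP(\mathcal{O}_C/p^n)=0$. The only step that requires genuine care is the extraction of the equation $p^n=dy$ from the universal property of $\Prism_R$; once that is in place, the rest of the argument is a direct invocation of the $\delta$-ring nilpotence criterion proved at the start of the section.
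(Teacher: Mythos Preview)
Your proof is correct and follows essentially the same route as the paper's: reduce via \Cref{K1localTP} to showing $d$ is $p$-adically nilpotent, extract the relation $p^n = d y$ from the existence of the map $\mathcal{O}_C/p^n \to \Prism_R/(d)$, and invoke \Cref{nilpotencelemma}. The only (cosmetic) difference is that the paper applies \Cref{nilpotencelemma} in $\Prism_R$ itself---where $(p,d)$-adic completeness is part of the definition---and then pushes the nilpotence of $d$ forward along the ring map $\Prism_R \to \widehat{\Prism}_R$, whereas you push the equation $p^n = dy$ forward first and apply the lemma in $\widehat{\Prism}_R$, which tacitly uses that the Nygaard completion is also $(p,d)$-adically complete.
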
 
\begin{proof} 
As there is a ring map $\Prism_{\mathcal{O}_C/p^n}\rightarrow \widehat{\Prism}_{\mathcal{O}_C/p^n}$, by the above it suffices to show that $d$ is $p$-adically nilpotent in $\Prism_{\mathcal{O}_C/p^n}$.  But by definition $\Prism_{\mathcal{O}_C/p^n}$ is a $(p,d)$-adically complete $\delta$-ring such that there is a homomorphism $\mathcal{O}_C/p^n\rightarrow \Prism_{\mathcal{O}_C/p^n}/d$.  It follows
that we can solve the equation 
$d y = p^n$ in $\Prism_{\mathcal{O}_C/p^n}$, and we deduce that $d$ is $p$-adically
nilpotent by 
\Cref{nilpotencelemma}, as desired. 
\end{proof}

\begin{remark} 
\label{useofetalecomp}
The main result that was shown above is that if $R$ is a 
$p$-power torsion $\mathcal{O}_C$-algebra which is quasiregular semiperfectoid,
then $d$ is $p$-adically nilpotent in $\Prism_R$. This is a special
case of the \'etale comparison theorem \cite[Theorem 9.1]{Prisms}, since in
this case the
generic fiber of $\mathrm{Spf}(R)$ vanishes; in particular, the use of the
\'etale comparison theorem could replace \Cref{nilpotencelemma} above. 
\end{remark}

\subsection{The $K(1)$-local $K$-theory of $\mathbb{Z}/p^n$}

Here we prove the following special case of our main result. 
\begin{proposition} 
\label{Zpn}
For each $n$, we have $L_{K(1)} K(\mathbb{Z}/p^n) = 0$. 
\end{proposition}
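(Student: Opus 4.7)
The plan is to bootstrap the vanishing $L_{K(1)}\TP(\mathcal{O}_C/p^n) = 0$ of \Cref{K1TPvanishes} to the desired $L_{K(1)}K(\mathbb{Z}/p^n) = 0$ in two stages: first passing from $\TP$ to $K$-theory for $\mathcal{O}_C/p^n$, then descending along the Galois-type cover $\mathbb{Z}/p^n \to \mathcal{O}_C/p^n$ via \cite{CMNN}.

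For the first stage, I apply the Nikolaus--Scholze fiber sequence $\TC \to \TC^- \xrightarrow{\phi - \mathrm{can}} \TP$. Since $L_{K(1)}\TP(\mathcal{O}_C/p^n) = 0$, we get $L_{K(1)}\TC(\mathcal{O}_C/p^n) \simeq L_{K(1)}\TC^-(\mathcal{O}_C/p^n)$. The latter also vanishes by a direct parallel to \Cref{K1localTP}: in $\TC^-(\mathcal{O}_C;\mathbb{Z}_p) = \ainf[u,v]/(uv-d)$, the Bott element is (up to a unit in $\mathbb{Z}_p^\times$) the class $([\epsilon^{1/p}]-1)u$, and using the mod $p$ congruences $[\epsilon]-1 \equiv ([\epsilon^{1/p}]-1)^p$ and $d \equiv ([\epsilon^{1/p}]-1)^{p-1}$, $p$-adically inverting this Bott element amounts to inverting $d$, which annihilates $\widehat{\Prism}_{\mathcal{O}_C/p^n}$ by the $p$-adic nilpotence of $d$ established in \Cref{K1TPvanishes}. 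Combining with a cyclotomic trace comparison $L_{K(1)}K(R) \simeq L_{K(1)}\TC(R)$ for rings $R$ on which $p$ is nilpotent (a consequence of Dundas--Goodwillie--McCarthy, reducing the relative trace to the perfectoid case $\mathcal{O}_C/p$), we obtain $L_{K(1)}K(\mathcal{O}_C/p^n) = 0$.

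For the second stage, I invoke Galois descent. The extension $\mathbb{Z}_p \hookrightarrow \mathcal{O}_C$ is the $p$-completion of the filtered colimit $\mathbb{Z}_p \hookrightarrow \mathcal{O}_{\overline{\mathbb{Q}_p}}$ of rings of integers of finite Galois extensions of $\mathbb{Q}_p$, carrying a continuous action of $G := \mathrm{Gal}(\overline{\mathbb{Q}_p}/\mathbb{Q}_p)$ with invariants $\mathbb{Z}_p$. Reducing modulo $p^n$ preserves this pro-Galois structure, so that $\mathbb{Z}/p^n \to \mathcal{O}_C/p^n$ is a pro-Galois cover with group $G$ and invariants $\mathbb{Z}/p^n$. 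The Galois descent theorem for $K(1)$-local $K$-theory from \cite{CMNN} then yields
$$L_{K(1)}K(\mathbb{Z}/p^n) \simeq \bigl(L_{K(1)}K(\mathcal{O}_C/p^n)\bigr)^{hG} = 0,$$
completing the proof.

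The main obstacle is the second stage: one needs \cite{CMNN} to apply to the present (generically ramified, hence non-\'etale) cover, or equivalently to handle $\mathbb{Z}_p \to \mathcal{O}_C$ by continuous Galois descent and then to commute this with reduction modulo $p^n$. The first stage --- the $\TC^-$-vanishing and the trace comparison --- is a routine adaptation of the tools already in play in Section 2.2.
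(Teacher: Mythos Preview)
Your first stage is essentially correct and close to the paper's argument, though the paper obtains $L_{K(1)}\TC^-(\mathcal{O}_C/p^n)=0$ more simply: the canonical map $\TC^-\to\TP$ has cofiber $\Sigma^2\THH(\mathcal{O}_C/p^n;\mathbb{Z}_p)_{hS^1}$, a filtered colimit of Eilenberg--MacLane spectra and hence $K(1)$-acyclic, so $L_{K(1)}\TC^-\simeq L_{K(1)}\TP=0$ directly. For the trace comparison $K\simeq\TC$ you need more than Dundas--Goodwillie--McCarthy: the paper invokes \cite[Th.~C]{CMM}, which gives the $p$-adic equivalence $K(\mathcal{O}_C/p^n;\mathbb{Z}_p)\simeq\TC(\mathcal{O}_C/p^n;\mathbb{Z}_p)$ outright.

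The genuine gap is your second stage, and you have correctly flagged it. The results of \cite{CMNN} give descent for \emph{finite} flat covers, not a continuous homotopy-fixed-point statement for the profinite group $G=\mathrm{Gal}(\overline{\mathbb{Q}_p}/\mathbb{Q}_p)$; writing $L_{K(1)}K(\mathbb{Z}/p^n)\simeq\bigl(L_{K(1)}K(\mathcal{O}_C/p^n)\bigr)^{hG}$ is not justified by that reference, and there is no evident convergence mechanism for such profinite fixed points in this non-\'etale setting. The paper sidesteps this entirely with a short trick: since $L_{K(1)}K$ commutes with filtered colimits (in the $p$-complete sense) and
\[
\varinjlim_{E}\,L_{K(1)}K(\mathcal{O}_E/p^n)\;\simeq\;L_{K(1)}K(\mathcal{O}_C/p^n)=0
\]
is a filtered colimit of \emph{ring} spectra, the unit must already be null at some finite stage, so $L_{K(1)}K(\mathcal{O}_E/p^n)=0$ for some finite extension $E/\mathbb{Q}_p$. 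Now $\mathbb{Z}/p^n\to\mathcal{O}_E/p^n$ is a single finite flat map, so \cite{CMNN} applies and expresses $L_{K(1)}K(\mathbb{Z}/p^n)$ as a totalization of $E_\infty$-algebras over $L_{K(1)}K(\mathcal{O}_E/p^n)=0$, whence it vanishes. This ``unit-is-null-at-a-finite-stage'' maneuver is the missing idea in your descent step.
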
 
\begin{proof} 
We first 
prove the weaker assertion that if $C$ is as in the previous section, then 
$L_{K(1)} K(\mathcal{O}_C/p^n) = 0$. 
Indeed, by \cite[Th.~C]{CMM}, 
the cyclotomic trace 
$K(\mathcal{O}_C/p^n; \mathbb{Z}_p) \to \TC(\mathcal{O}_C/p^n; \mathbb{Z}_p)$ is
an equivalence, so it suffices to show that $L_{K(1)}
\TC(\mathcal{O}_C/p^n; \mathbb{Z}_p) =0 $. 
Furthermore, according to \cite{NS18}, 
$\TC(\mathcal{O}_C/p^n;\mathbb{Z}_p)$ is an equalizer of two maps,
\begin{equation}  \label{equalizerK} \TC(\mathcal{O}_C/p^n; \mathbb{Z}_p) = \mathrm{eq}\left(\TC^-(\mathcal{O}_C/p^n; \mathbb{Z}_p) \rightrightarrows \TP(\mathcal{O}_C/p^n;
\mathbb{Z}_p)\right).\end{equation}
The first (canonical) map has cofiber given by $\Sigma^2 \THH(\mathcal{O}_C/p^n;
\mathbb{Z}_p)_{hS^1}$, which is clearly $K(1)$-acyclic as a homotopy colimit of
Eilenberg-MacLane spectra. 
Thus, $L_{K(1)} \TC^-(\mathcal{O}_C/p^n; \mathbb{Z}_p) \simeq 
L_{K(1)} \TP(\mathcal{O}_C/p^n; \mathbb{Z}_p) $, and the latter vanishes by
\Cref{K1TPvanishes}. Using the formula \eqref{equalizerK}, we get that 
$L_{K(1)} \TC(\mathcal{O}_C/p^n; \mathbb{Z}_p) = 0$ as desired. 

Now we descend to prove the result for $\mathbb{Z}/p^n$.
Let $E$ range over the finite extensions of $\mathbb{Q}_p$ inside
$\overline{\mathbb{Q}_p}$. 
For any such, we have a finite flat morphism
$\mathbb{Z}/p^n \to \mathcal{O}_E/p^n$. 
The colimit over $E$ yields $\mathcal{O}_C/p^n$.
Therefore,
in the $\infty$-category of $p$-complete $E_\infty$-rings, we have
\[ \varinjlim_E L_{K(1)} K( \mathcal{O}_E/p^n) = L_{K(1)}
K(\mathcal{O}_C/p^n). \]
Since we have just shown that the target vanishes, the source does too. Now the
source is a filtered colimit in ($p$-complete) \emph{ring} spectra, and a ring spectrum vanishes if and only if its unit is null-homotopic. We conclude that
for some finite extension $E$, $L_{K(1)} K(\mathcal{O}_E/p^n)$ vanishes. 
Finally, by the descent results of \cite{CMNN} (in particular, finite flat
descent for $L_{K(1)} K(-)$ on commutative rings), 
we find that 
$$L_{K(1)} K(\mathbb{Z}/p^n) \simeq \mathrm{Tot}\left( L_{K(1)} K(\mathcal{O}_E/p^n)
\rightrightarrows  L_{K(1)} K( \mathcal{O}_E/p^n \otimes_{\mathbb{Z}/p^n}
\mathcal{O}_E/p^n) \triplearrows \dots \right).$$
Since this is a diagram of $E_\infty$-rings, we conclude that all the terms in the totalization must vanish, and we get
$L_{K(1)} K(\mathbb{Z}/p^n) =0 $ as desired. 
\end{proof} 

\subsection{The main result for $\mathbb{Z}$-linear $\infty$-categories}

In this section, we explain the deduction of \Cref{mainthmintro}. 
This argument also appears in \cite[Sec.~3.1]{LMT}. 

Let $R$ be a commutative ring, and 
let $\mathcal{C}$ be a small $R$-linear stable $\infty$-category (always assumed
idempotent-complete). 
Given a nonzerodivisor (for simplicity) $x \in R$, 
we say that $\mathcal{C}$ is \emph{$x$-power torsion} if for each object $Y \in
\mathcal{C}$, we have that $x^n: Y \to Y$ is nullhomotopic for some $n\geq 0$. 
For instance, the 
kernel of the map $\perf(R) \to \perf(R[1/x])$, i.e., those perfect complexes
of $R$-modules which are acyclic outside of $(x)$, forms such an $R$-linear
stable $\infty$-category. 
Moreover, for each $R$-algebra $R'$ such that $R'$ is perfect as an $R$-module, we
can define the $\infty$-category 
of $R'$-modules in $\mathcal{C}$, which we denote
$\mathrm{Mod}_{R'}(\mathcal{C})$; this is then an $R'$-linear stable
$\infty$-category. 
Examples of objects in
$\mathrm{Mod}_{R'}(\mathcal{C})$ include objects of the form $R' \otimes Y$ for
$Y \in \mathcal{C}$; these are given by extension of scalars from $\mathcal{C}$. 
For simplicity, when working with these objects, we will simply write $\hom_{R'}$ instead of
$\hom_{\mathrm{Mod}_{R'}(\mathcal{C})}$. 

We use the following basic fact. 
\begin{proposition} 
\label{colimcat}
Let $\mathcal{C}$ be an $R$-linear (idempotent-complete) stable $\infty$-category which is $x$-power
torsion. Then we have, in $R$-linear stable $\infty$-categories
\begin{equation} \label{colimfun}  \varinjlim_n \mathrm{Mod}_{R/x^n}(\mathcal{C}) \simeq \mathcal{C},
\end{equation}
via the natural restriction of scalars maps. 
\end{proposition}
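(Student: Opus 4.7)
The plan is to argue separately that the natural comparison functor $F \colon \varinjlim_n \mod_{R/x^n}(\mathcal{C}) \to \mathcal{C}$ is essentially surjective and fully faithful. Essential surjectivity is immediate from the hypothesis: any $Y \in \mathcal{C}$ satisfies $x^n \cdot \mathrm{id}_Y \simeq 0$ for some $n$, and a choice of nullhomotopy factors the structure map $R \to \mathrm{End}_\mathcal{C}(Y)$ through the quotient $R \to R/x^n$, upgrading $Y$ to an object of $\mod_{R/x^n}(\mathcal{C})$ whose image under $F$ is $Y$.

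For full faithfulness, I would reformulate via tensor products of $R$-linear stable $\infty$-categories. Since $R/x^n$ is a perfect $R$-module, one has a natural identification $\mod_{R/x^n}(\mathcal{C}) \simeq \perf(R/x^n) \otimes_{\perf(R)} \mathcal{C}$, and pulling the filtered colimit past the tensor product gives
\[ \varinjlim_n \mod_{R/x^n}(\mathcal{C}) \simeq \Big(\varinjlim_n \perf(R/x^n)\Big) \otimes_{\perf(R)} \mathcal{C}. \]
The colimit on the right, with transitions given by restriction of scalars along the surjections $R/x^{n+1} \twoheadrightarrow R/x^n$, should be identified with the thick subcategory $\perf^{(x)}(R) \subset \perf(R)$ of perfect complexes on which some power of $x$ acts nullhomotopically. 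Granting this, one tensors the Verdier cofiber sequence $\perf^{(x)}(R) \to \perf(R) \to \perf(R[1/x])$ of $\perf(R)$-linear stable $\infty$-categories against $\mathcal{C}$; the rightmost term $\mathcal{C} \otimes_{\perf(R)} \perf(R[1/x])$ vanishes by the $x$-power torsion hypothesis, so the left-hand map becomes an equivalence, yielding $\varinjlim_n \mod_{R/x^n}(\mathcal{C}) \simeq \mathcal{C}$.

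The main obstacle is the identification $\varinjlim_n \perf(R/x^n) \simeq \perf^{(x)}(R)$, which is essentially the proposition itself applied to $\mathcal{C} = \perf^{(x)}(R)$ and therefore must be argued directly. Essential surjectivity there is the compactness argument: for $M \in \perf^{(x)}(R)$ one has $\varinjlim_k \hom_R(M, M) \simeq \hom_R(M, M[1/x]) = 0$ by compactness of $M$, so $x^k \cdot \mathrm{id}_M$ is already nullhomotopic for some finite $k$. Full faithfulness is a mapping-spectrum calculation comparing $\hom_R(M, N)$ with $\hom_{R/x^m}(M, N)$ for $M, N \in \perf(R/x^n)$ using the cofiber sequence $R \xrightarrow{x^m} R \to R/x^m$ together with the decomposition $M \otimes_R R/x^m \simeq M \oplus M[1]$ (valid since $x^m$ acts as zero on $M$), and verifying that the extra summand collapses in the filtered colimit over $m$.
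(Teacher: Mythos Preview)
There are two genuine gaps. First, your essential surjectivity step asserts that a nullhomotopy of $x^n \cdot \mathrm{id}_Y$ factors the $R$-action through $R/x^n$; but a nullhomotopy only produces an $R$-\emph{module} map $R/x^n \to \mathrm{End}_{\mathcal{C}}(Y)$ extending the unit, not an $E_1$-$R$-algebra map, and the latter is what an $R/x^n$-module structure on $Y$ requires. This is easy to repair: one observes instead (as the paper does) that $Y$ is a retract of $R/x^n \otimes_R Y \simeq Y \oplus \Sigma Y$, which is manifestly in the image. Second, and more seriously, your Verdier-sequence reduction assumes that if the cofiber of $\perf^{(x)}(R) \otimes_{\perf(R)} \mathcal{C} \to \mathcal{C}$ vanishes then the map is an equivalence. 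But the $\infty$-category of small idempotent-complete stable $\infty$-categories is not stable: a functor whose Verdier cofiber vanishes is only essentially surjective up to thick closure, not automatically fully faithful. So even granting $\varinjlim_n \perf(R/x^n) \simeq \perf^{(x)}(R)$, the passage to general $\mathcal{C}$ by tensoring is not justified as written; one would need extra input (e.g.\ that this particular fully faithful inclusion stays fully faithful after tensoring, which is essentially the statement in question).

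The paper avoids the reduction and proves full faithfulness directly for arbitrary $\mathcal{C}$: for $M,N \in \mod_{R/x^n}(\mathcal{C})$ it rewrites both $\hom_{R/x^m}(M,N)$ and $\hom_R(M,N)$ as $\hom_{R/x^n}(M \otimes_{R/x^n}(-),N)$ against the $(R/x^n,R/x^n)$-bimodules $R/x^n \otimes_{R/x^m} R/x^n$ and $R/x^n \otimes_R R/x^n$, and then shows the tower $\{R/x^n \otimes_{R/x^m} R/x^n\}_{m \geq n}$ is pro-constant with value $R/x^n \otimes_R R/x^n$ by an explicit calculation with free simplicial commutative rings (the degree-$2$ generators form a pro-zero system). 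Your final paragraph is heading toward exactly this computation, but the phrasing via the splitting $M \otimes_R R/x^m \simeq M \oplus M[1]$ is misleading: these splittings are not compatible with the transition maps in $m$, so one cannot argue termwise that ``the extra summand collapses''; controlling the transition maps is precisely the pro-constancy of the bimodule tower.
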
 
\begin{proof} 
In the following, all tensor products of $R$-modules are derived. 
Let $M, N \in \mathrm{Mod}_{R/x^n}(\mathcal{C})$. 
Then for $m \geq n$, we have by adjunction
\[ \hom_{R/x^m}(M, N) = \hom_{R/x^n}(M \otimes_{R/x^m} R/x^n, N) 
= \hom_{R/x^n}(M \otimes_{R/x^n} (R/x^n \otimes_{R/x^m} R/x^{n}), N)
,\]
where the relative tensor products are regarded as $R/x^n$-modules in
$\mathrm{Ind}(\mathcal{C})$. 
Similarly, we have
$$ \hom_{R}(M, N) = \hom_{R/x^n}(M \otimes_{R/x^n} (R/x^n \otimes_{R} R/x^n),
N).$$
It therefore suffices to show that
the tower in $(R/x^n, R/x^n)$-bimodules $\left\{R/x^n \otimes_{R/x^m}
R/x^n\right\}_{m \geq n}$ is pro-constant with value  $R/x^n \otimes_R R/x^n$;
this will prove that 
\[ \hom_R(M, N) = \varinjlim_{m \geq n} \hom_{R/x^m}(M, N),  \]
and that the functor in \eqref{colimfun} is fully faithful. 
It is easy to see that any object in $\mathcal{C}$ is (at least up to
retracts) in the essential image, since generating objects $R/x \otimes Y$ are
in the essential image. 

Now the pro-constancy claim follows from the following more precise assertion: 
the tower
of simplicial commutative rings $\left\{ R/x^n \otimes_{R/x^m}
R/x^n\right\}_{m \geq n}$ is pro-constant with value 
$R/x^n \otimes_R R/x^n$. 
Indeed, $R/x^n \otimes_R R/x^n$ is the free simplicial commutative ring over
$R/x^n$ on a class in degree $1$, and a short computation shows that 
for $m \geq n$, 
$R/x^n \otimes_{R/x^m} R/x^n$ is the free simplicial commutative ring on
classes in degree $1$ and $2$; moreover, the classes in degree two form a
pro-zero system. 
\end{proof} 

Finally, we can prove \Cref{mainthmintro}, which we restate
for arbitrary $\mathbb{Z}$-linear stable $\infty$-categories.
\begin{theorem} 
\label{mainthm}
Let $\mathcal{C}$ be a $\mathbb{Z}$-linear stable $\infty$-category. Then 
$L_{K(1)} K(\mathcal{C}) = L_{K(1)} K(\mathcal{C}[1/p])$. 
\end{theorem}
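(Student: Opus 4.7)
The plan is to reduce, via the Verdier localization sequence for $K$-theory, to the vanishing of $L_{K(1)}K(\mathcal{C}^{p\text{-tors}})$, where $\mathcal{C}^{p\text{-tors}}\subset\mathcal{C}$ denotes the thick subcategory of $p$-power torsion objects. By construction $\mathcal{C}[1/p]$ is (the idempotent completion of) the Verdier quotient $\mathcal{C}/\mathcal{C}^{p\text{-tors}}$, so the localization theorem of Blumberg--Gepner--Tabuada supplies a fiber sequence of non-connective $K$-theory spectra
$$K(\mathcal{C}^{p\text{-tors}})\to K(\mathcal{C})\to K(\mathcal{C}[1/p]),$$
reducing the theorem to the assertion that $L_{K(1)}K(\mathcal{C}^{p\text{-tors}})=0$.

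For this I would present $\mathcal{C}^{p\text{-tors}}$ through its $\mathbb{Z}/p^n$-linear pieces. Applying \Cref{colimcat} with $x=p$ yields
$$\mathcal{C}^{p\text{-tors}}\simeq \varinjlim_n \mathrm{Mod}_{\mathbb{Z}/p^n}(\mathcal{C}^{p\text{-tors}}),$$
and since non-connective $K$-theory commutes with filtered colimits of small stable $\infty$-categories while $K(1)$-homology commutes with filtered colimits of spectra, one obtains
$$L_{K(1)}K(\mathcal{C}^{p\text{-tors}})\simeq \varinjlim_n L_{K(1)}K\bigl(\mathrm{Mod}_{\mathbb{Z}/p^n}(\mathcal{C}^{p\text{-tors}})\bigr).$$
It therefore suffices to show that each term on the right vanishes. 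Now each $\mathrm{Mod}_{\mathbb{Z}/p^n}(\mathcal{C}^{p\text{-tors}})$ is $\mathbb{Z}/p^n$-linear, so by the lax symmetric monoidality of $K$-theory, $K\bigl(\mathrm{Mod}_{\mathbb{Z}/p^n}(\mathcal{C}^{p\text{-tors}})\bigr)$ is canonically a module spectrum over the $E_\infty$-ring $K(\mathbb{Z}/p^n)$. Its $K(1)$-localization is then a module over $L_{K(1)}K(\mathbb{Z}/p^n)$, which vanishes by \Cref{Zpn}; hence it too vanishes.

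The main obstacle is not computational but organizational: one must assemble the localization sequence in non-connective $K$-theory, the filtered-colimit presentation of \Cref{colimcat}, and the $K(\mathbb{Z}/p^n)$-module structure on $K$-theory of $\mathbb{Z}/p^n$-linear categories into a clean chain. The genuine arithmetic content---the vanishing of $L_{K(1)}K(\mathbb{Z}/p^n)$---is already in hand, and the module argument lets it propagate from the ring $\mathbb{Z}/p^n$ to arbitrary $\mathbb{Z}/p^n$-linear coefficients.
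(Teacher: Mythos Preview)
Your argument is correct and follows the same route as the paper: reduce via the localization sequence to showing $L_{K(1)}K(\mathcal{C}^{p\text{-tors}})=0$, then use \Cref{colimcat} to write $\mathcal{C}^{p\text{-tors}}$ as a filtered colimit of $\mathbb{Z}/p^n$-linear categories and invoke \Cref{Zpn}; you simply make explicit the $K(\mathbb{Z}/p^n)$-module reasoning that the paper leaves implicit. One cosmetic remark: the displayed identity $L_{K(1)}K(\mathcal{C}^{p\text{-tors}})\simeq \varinjlim_n L_{K(1)}K(\cdots)$ is not literally true for a general filtered system (since $L_{K(1)}$ is not smashing), but your stated justification---that $K(1)$-homology commutes with filtered colimits, so a filtered colimit of $K(1)$-acyclics is $K(1)$-acyclic---is exactly what is needed and gives the desired vanishing.
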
 
\begin{proof} 
Let $\mathcal{C}_{\mathrm{tors}} \subset \mathcal{C}$ be the subcategory of
$p$-power torsion objects. 
Then we have a localization sequence
$\mathcal{C}_{\mathrm{tors}} \to \mathcal{C} \to \mathcal{C}[1/p]$, 
so the induced sequence in algebraic $K$-theory shows that it suffices to prove
that 
$L_{K(1)} K( \mathcal{C}_{\mathrm{tors}}) = 0$. 
But we have seen (\Cref{colimcat}) that $\mathcal{C}_{\mathrm{tors}}$ is a filtered colimit of a
sequence of 
stable $\infty$-categories, each of 
which is $\mathbb{Z}/p^n$-linear for some $n$. 
By \Cref{Zpn}, $L_{K(1)} K$ vanishes
for each of these; thus, it vanishes for $\mathcal{C}_{\mathrm{tors}}$ as
desired. 
\end{proof} 

\subsection{Complements}

Combining with the main result of \cite{CMM}, we get the following.

\begin{theorem} 
\label{K1Rcomp}
Let $R$ be a commutative ring. Then there is a natural equivalence 
$L_{K(1)} \TC(R)\simeq L_{K(1)}K(R_{\widehat{p}}[1/p])$. 
If $R$ is henselian along $p$, then these are naturally equivalent to $L_{K(1)}
K(R[1/p])$. 
\end{theorem}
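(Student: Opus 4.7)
The plan is to combine \Cref{mainthm}, the main result of \cite{CMM} comparing $K$-theory and $\TC$ after $p$-completion, and the standard $p$-adic invariance of $\TC$ for connective rings.

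First, for any commutative ring $R$, I would establish the identification $L_{K(1)}\TC(R) \simeq L_{K(1)} K(R_{\widehat{p}}[1/p])$ as follows. Since $R_{\widehat{p}}$ is $p$-complete it is henselian along $(p)$; the main theorem of \cite{CMM} then provides a cartesian square comparing $K(R_{\widehat p}; \mathbb{Z}_p) \to \TC(R_{\widehat p}; \mathbb{Z}_p)$ to the analogous map for the quotient $R_{\widehat p}/p$, and \cite[Th.~C]{CMM} identifies these two for the $\mathbb{F}_p$-algebra $R_{\widehat p}/p$; it follows that $K(R_{\widehat p}; \mathbb{Z}_p) \simeq \TC(R_{\widehat p}; \mathbb{Z}_p)$. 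Combining this with \Cref{mainthm} applied to $R_{\widehat p}$ (which gives $L_{K(1)} K(R_{\widehat p}) \simeq L_{K(1)} K(R_{\widehat p}[1/p])$) and with the $p$-completion invariance of $\TC(-;\mathbb{Z}_p)$ (giving $L_{K(1)} \TC(R) \simeq L_{K(1)} \TC(R_{\widehat{p}})$, since $\THH$ of a connective ring is invariant under $p$-completion of the input), one obtains the chain
\[
L_{K(1)} \TC(R) \simeq L_{K(1)} \TC(R_{\widehat{p}}) \simeq L_{K(1)} K(R_{\widehat{p}}) \simeq L_{K(1)} K(R_{\widehat{p}}[1/p]).
\]

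For the second claim, assume $R$ is henselian along $(p)$. Then $R \to R_{\widehat{p}}$ is a morphism of $p$-henselian rings that is an isomorphism modulo $p$, and the naturality of the \cite{CMM} cartesian square together with $p$-completion invariance of $\TC$ implies that $K(R; \mathbb{Z}_p) \to K(R_{\widehat{p}}; \mathbb{Z}_p)$ is an equivalence (equivalently, this is an instance of Gabber--Suslin rigidity, applied via \cite{CMM}). Applying $L_{K(1)}$ and invoking \Cref{mainthm} on both sides then yields $L_{K(1)} K(R[1/p]) \simeq L_{K(1)} K(R_{\widehat{p}}[1/p])$, which by the first part is equivalent to $L_{K(1)} \TC(R)$.

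The main obstacle is correctly unpacking \cite{CMM}: one needs both its cartesian-square formulation for $p$-henselian pairs and the $K \simeq \TC$ identity in the $\mathbb{F}_p$-algebra case (the latter is precisely what was already used in the proof of \Cref{Zpn} for $\mathcal{O}_C/p^n$). Once those inputs are in hand, everything else is formal, using \Cref{mainthm} and basic $p$-adic properties of $\THH$ and $\TC$.
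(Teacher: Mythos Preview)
Your overall strategy matches the paper's: establish the chain
\[
L_{K(1)} \TC(R) \simeq L_{K(1)} \TC(R_{\widehat{p}}) \simeq L_{K(1)} K(R_{\widehat{p}}) \simeq L_{K(1)} K(R_{\widehat{p}}[1/p])
\]
using $p$-adic invariance of $\TC$, the \cite{CMM} fiber square, and \Cref{mainthm}. The second part (the $p$-henselian case) is also handled correctly.

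However, there is a genuine error in the middle step. You claim that \cite[Th.~C]{CMM} gives $K(R_{\widehat p}/p; \mathbb{Z}_p) \simeq \TC(R_{\widehat p}/p; \mathbb{Z}_p)$ for an arbitrary $\mathbb{F}_p$-algebra $R_{\widehat p}/p$, and hence that $K(R_{\widehat p}; \mathbb{Z}_p) \simeq \TC(R_{\widehat p}; \mathbb{Z}_p)$ before $K(1)$-localization. Neither assertion is true in general. Theorem~C of \cite{CMM} requires hypotheses on the $\mathbb{F}_p$-algebra (it was invoked in the proof of \Cref{Zpn} for $\mathcal{O}_C/p^n$, whose reduction mod $p$ is semiperfect); it does not apply to, say, $R/p = \mathbb{F}_p[x]$, where $K$ and $\TC$ differ already $p$-adically. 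Likewise $K(\mathbb{Z}_p;\mathbb{Z}_p)$ and $\TC(\mathbb{Z}_p;\mathbb{Z}_p)$ are not equivalent.

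The paper's fix is to argue only after $K(1)$-localization: in the \cite{CMM} fiber square for the henselian pair $(R_{\widehat p},p)$, the bottom row consists of $K(R/p;\mathbb{Z}_p)$ and $\TC(R/p;\mathbb{Z}_p)$, both of which are modules over $K(\mathbb{F}_p)$. Since $L_{K(1)}K(\mathbb{F}_p)=0$ (Quillen), the entire bottom row vanishes after $K(1)$-localization, and one obtains $L_{K(1)}K(R_{\widehat p}) \simeq L_{K(1)}\TC(R_{\widehat p})$, which is all that is needed. Replace your appeal to \cite[Th.~C]{CMM} by this observation and the argument goes through.
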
 

\begin{remark} 

In the above statement, the $p$-completion $R_{\widehat{p}}$ can be
taken to be either derived or ordinary $p$-completion; it doesn't matter for the
statement, as the (mod $p$) $K$-theory of $\mathbb{Z}[1/p]$-algebras is
nil-invariant and truncating in the sense of \cite{LT}.  
\end{remark} 
\begin{proof} 
We claim that all of the natural maps
$$\TC(R)\rightarrow \TC(R_{\widehat{p}})\leftarrow K(R_{\widehat{p}})\rightarrow K(R_{\widehat{p}}[1/p])$$
are $K(1)$-equivalences.  For the left map, this is because $\TC/p$ is
invariant under (mod $p$) equivalences.  For the right map this is by
\Cref{mainthmintro}. For the middle map, \cite{CMM} gives a fiber square
\[ \xymatrix{
K(R_{\widehat{p}}; \mathbb{Z}_p) \ar[d]  \ar[r] &  \TC(R_{\widehat{p}}; \mathbb{Z}_p) \ar[d]  \\
K(R/p; \mathbb{Z}_p) \ar[r] &  \TC(R/p; \mathbb{Z}_p)
}.\]
But $K(1)$-localization annihilates the bottom row since $L_{K(1)}K(\mathbb{F}_p)=0$.  Thus we obtain the desired equivalence $L_{K(1)} K(R_{\widehat{p}}) \simeq L_{K(1)} \TC(R_{\widehat{p}})$.
The deduction in the $p$-henselian case follows similarly from \cite{CMM}. 
\end{proof} 

\begin{remark}This result is a form of the ``\'etale comparison theorem" of
Bhatt--Scholze in integral $p$-adic Hodge theory, \cite[Th.~9.1]{Prisms}.  Indeed, $\TC(R)$ is
closely related to the complexes $\mathbb{Z}_p(n)$ of \cite{BMS2},
whereas $L_{K(1)}K(R_{\widehat{p}}[1/p])$ is related in a similar manner to the
standard \'etale $\mathbb{Z}_p(n)$'s on the rigid analytic generic fiber
\cite{Tho85}. With respect to appropriate motivic filtrations on both sides, we
expect this result to recover the \'etale comparison theorem. 
\end{remark}

\begin{question} 
\begin{enumerate}
\item  

The first statement of \Cref{K1Rcomp} also makes sense for associative rings $R$.  It is natural to guess that the theorem holds in that greater generality, and constitutes a kind of ``non-commutative $p$-adic Hodge theory."  We remark that the only ingredient in the above proof which required commutativity was the rigidity result of \cite{CMM} for the ideal $(p)\subset R$ when $R$ is $p$-complete.

\item
One could also speculate about higher height analogs of \Cref{K1Rcomp}, in the
context of structured ring spectra $R$: is there such a thing as ``$v_n$-adic
Hodge theory"?  Note that there is a ``red shift" aspect to \Cref{K1Rcomp}, in
that $p=v_0$ is the relevant chromatic element on the inside of the $K$-theory whereas $v_1$ is the relevant chromatic element on the outside.

\end{enumerate}
\end{question}

This result can also be interpreted in the light of Selmer $K$-theory.  Recall:

\begin{definition}[Selmer $K$-theory, \cite{Artinmaps}] 
Let $\mathcal{C}$ be a $\mathbb{Z}$-linear $\infty$-category. 
We let $K^{Sel}(\mathcal{C}) = \TC(\mathcal{C}) \times_{L_{1} \TC(\mathcal{C}) }
L_1 K(\mathcal{C})$. 
\end{definition} 

As in \cite{CM}, Selmer $K$-theory, while a noncommutative invariant (i.e., one
defined for stable $\infty$-categories), turns out to recover \'etale $K$-theory
for commutative rings in degrees $\geq -1$. 
The definition of Selmer $K$-theory involves a pullback square; it is built
from three other noncommutative invariants. We observe 
here that the pullback, at least after $p$-adic completion (which we denote
by $K^{Sel}(\cdot; \mathbb{Z}_p)$) and for commutative
rings, is exactly the
arithmetic square. 

For the next result, since we need to use derived completion, we work with
connective $E_\infty$-algebras over $\mathbb{Z}$ for convenience. 
Recall also that $L_{K(1)} \colon \sp \to \sp$ is the composition of
$L_1$-localization followed by $p$-completion. 

\begin{corollary} 
Let $R$ be a connective $E_\infty$-algebra over $\mathbb{Z}$. 
Then the pullback square defining $K^{Sel}(R; \mathbb{Z}_p)$ is also the
tautological pullback square (valid for any localizing invariant)
$K^{Sel}(R_{\hat{p}}; \mathbb{Z}_p) \times_{K^{Sel}( {R}_{\hat{p}}[1/p];
\mathbb{Z}_p)} K^{Sel}(R[1/p]; \mathbb{Z}_p)$.
\label{kselpullback}
\end{corollary}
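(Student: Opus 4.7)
The plan is to recognize $K^{Sel}(-;\mathbb{Z}_p)$ as a finitary localizing invariant, apply the arithmetic pullback square that any such invariant satisfies, and then identify the three ``external'' corners with the entries of the Selmer pullback. Recall that, since $L_{K(1)}$ factors as $L_1$ followed by $p$-completion, the defining pullback takes the form
$$K^{Sel}(R;\mathbb{Z}_p) \simeq \TC(R;\mathbb{Z}_p) \times_{L_{K(1)} \TC(R)} L_{K(1)} K(R).$$

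First I would note that $K^{Sel}$, being a pullback of finitary localizing invariants ($\TC$, $L_1 \TC$, and $L_1 K$), is itself finitary and localizing; hence so is $K^{Sel}(-;\mathbb{Z}_p)$. For any such invariant $E$ on connective $E_\infty$-$\mathbb{Z}$-algebras, the Karoubi localization sequence $\perf_{(p)\text{-nil}}(R) \to \perf(R) \to \perf(R[1/p])$ together with the excision equivalence $\perf_{(p)\text{-nil}}(R) \simeq \perf_{(p)\text{-nil}}(R_{\widehat{p}})$ yields the tautological arithmetic pullback
$$E(R) \simeq E(R_{\widehat{p}}) \times_{E(R_{\widehat{p}}[1/p])} E(R[1/p]).$$
Applied to $E = K^{Sel}(-;\mathbb{Z}_p)$, this produces the square in the statement. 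The remaining task is to identify its three external corners.

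For the corner at $R_{\widehat{p}}$, \Cref{K1Rcomp} supplies $L_{K(1)} K(R_{\widehat{p}}) \simeq L_{K(1)} \TC(R_{\widehat{p}})$, which collapses the Selmer pullback to $K^{Sel}(R_{\widehat{p}};\mathbb{Z}_p) \simeq \TC(R_{\widehat{p}};\mathbb{Z}_p) \simeq \TC(R;\mathbb{Z}_p)$, the last equivalence using mod-$p$ invariance of $\TC/p$. For the two corners at which $p$ is inverted, $\THH$ commutes with filtered colimits, so $p$ acts invertibly on $\THH(R[1/p])$ and hence on $\TC(R[1/p])$ and $L_{K(1)} \TC(R[1/p])$; this forces $\TC(R[1/p];\mathbb{Z}_p) = 0 = L_{K(1)} \TC(R[1/p])$, and similarly over $R_{\widehat{p}}$. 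The Selmer pullback then collapses to $L_{K(1)} K(-)$, and \Cref{mainthmintro} and \Cref{K1Rcomp} respectively identify $K^{Sel}(R[1/p];\mathbb{Z}_p) \simeq L_{K(1)} K(R)$ and $K^{Sel}(R_{\widehat{p}}[1/p];\mathbb{Z}_p) \simeq L_{K(1)} \TC(R)$. Substituting these three identifications into the arithmetic square recovers exactly the defining pullback of $K^{Sel}(R;\mathbb{Z}_p)$.

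The main subtlety will be verifying that these identifications of corners are compatible with the structure maps of the two squares, so that one obtains an equivalence of pullback \emph{squares} rather than merely of their limits. This should follow from the naturality of the cyclotomic trace and of the localization map $\mathrm{id} \to L_{K(1)}$, so the verification is essentially a diagram chase, but needs to be written out carefully.
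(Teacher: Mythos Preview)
Your argument is correct and follows essentially the same route as the paper's proof: identify each external corner of the tautological arithmetic square for $K^{Sel}(-;\mathbb{Z}_p)$ with the corresponding corner of the defining Selmer pullback, using \Cref{mainthmintro} and \Cref{K1Rcomp}. One small technical point: \Cref{K1Rcomp} is stated only for ordinary commutative rings, so to obtain $L_{K(1)} K(R_{\hat{p}}) \simeq L_{K(1)} \TC(R_{\hat{p}})$ for a general connective $E_\infty$-algebra $R$ you should additionally invoke the Dundas--Goodwillie--McCarthy theorem to reduce to $\pi_0 R_{\hat{p}}$, exactly as the paper does.
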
 
\begin{proof} 
First, $\TC(\cdot; \mathbb{Z}_p)$ is invariant under passage to
$p$-completion and agrees with $K^{Sel}(\cdot; \mathbb{Z}_p)$ for $p$-complete
connective $E_\infty$-algebras.
Second, the factor $L_{K(1)} K(\cdot; \mathbb{Z}_p)$ is invariant under passage
to inverting $p$ on the argument (as we showed above) for
$\mathbb{Z}$-algebras and agrees with
$K^{Sel}(\cdot; \mathbb{Z}_p)$ for $\mathbb{Z}[1/p]$-algebras. Finally, the map 
$L_{K(1)} K(\cdot; \mathbb{Z}_p) \to L_{K(1)} \TC(\cdot; \mathbb{Z}_p)$
is an equivalence for $p$-complete connective $E_\infty$-algebras, thanks to
\Cref{K1Rcomp} and the Dundas--Goodwillie--McCarthy theorem. 
These three observations imply the claim. 
\end{proof} 

\begin{remark} 

\Cref{kselpullback}
raises the question whether there is a direct definition of
Selmer $K$-theory (at least after $p$-completion), without forming the above
pullback square.

\end{remark} 

\section{The K\"unneth formula}

To begin with, we recall the $K(1)$-local case of the 
celebrated result of Goerss--Hopkins--Miller \cite{GH, RezkHM}, which describes
(in this case) the
$E_\infty$-ring $KU_{\hat{p}}$ and its space of automorphisms. 
See also \cite[Sec.~5]{Ell2} for a modern account of some generalizations. 

\begin{theorem}[Goerss--Hopkins--Miller] 
\label{GHM}
The space of $E_\infty$-automorphisms of $KU_{\hat{p}}$ is given by
$\mathbb{Z}_p^{\times}$, via Adams operations $\psi^x$, $x\in\mathbb{Z}_p^\times$, characterized by $\psi^x (t)=x\cdot t$ for all $t\in\pi_2 KU_{\hat{p}}$.
\end{theorem}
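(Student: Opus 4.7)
The plan is to identify the space of $E_\infty$-ring automorphisms of $KU_{\hat p}$ with the space of automorphisms of its associated formal group, via Goerss--Hopkins obstruction theory (or Lurie's spectral deformation theory, cf.~\cite{Ell2}). The construction of the action uses Snaith's presentation $KU \simeq \Sigma^\infty_+ \mathbb{CP}^\infty[\beta^{-1}]$ as $E_\infty$-rings: every integer $n$ coprime to $p$ induces the $n$-th power map on $\mathbb{CP}^\infty = BU(1)$, which is a map of grouplike $E_\infty$-spaces; the resulting $E_\infty$-ring endomorphism of $\Sigma^\infty_+ \mathbb{CP}^\infty$ sends $\beta$ to a class reducing to $n \beta$, hence to a unit multiple of $\beta$ after $p$-completion, so the construction extends to $KU_{\hat p}$ with the prescribed effect on $\pi_2$. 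By continuity and density of $\mathbb{Z}_{(p)}^\times \subset \mathbb{Z}_p^\times$, these assemble into a continuous $\mathbb{Z}_p^\times$-action.

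For uniqueness, I would realize $KU_{\hat p}$ as the Lubin--Tate $E_\infty$-ring attached to the formal multiplicative group $\widehat{\mathbb{G}}_m$ over $\mathbb{F}_p$. The Goerss--Hopkins--Miller theorem asserts that, for any Lubin--Tate $E_\infty$-ring at a formal group of finite height, the space of $E_\infty$-automorphisms is discrete and equals the automorphism group of the underlying deformation datum. Here $\mathrm{End}(\widehat{\mathbb{G}}_m / \mathbb{F}_p) = \mathbb{Z}_p$ (height one), so the automorphism group is $\mathbb{Z}_p^\times$, matching the algebraic action constructed above and giving the stated scalar formula on $\pi_2$ (where the canonical coordinate of the multiplicative formal group law lifts the Bott class).

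The main obstacle is the obstruction-theoretic input: one must show that the Goerss--Hopkins obstruction groups, computed as André--Quillen cohomology of the cooperations algebra $KU^0 KU_{\hat p} \simeq C(\mathbb{Z}_p^\times, \mathbb{Z}_p)$ viewed as a comodule algebra over itself, vanish in the relevant bidegrees. This vanishing is a manifestation of the ind-\'etaleness of the height one Lubin--Tate tower, and is exactly what forces the topological moduli space of $E_\infty$-structures on $KU_{\hat p}$ to coincide with its algebraic shadow, the discrete moduli of deformations of $\widehat{\mathbb{G}}_m$.
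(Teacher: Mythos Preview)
The paper does not prove this statement; it is quoted as a known result with citations to Goerss--Hopkins \cite{GH}, Rezk's notes \cite{RezkHM}, and \cite[Sec.~5]{Ell2}. So there is no proof in the paper to compare against.

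That said, your outline is essentially the standard argument and is correct in spirit. A couple of remarks. First, the construction of the $\mathbb{Z}_p^\times$-action via Snaith's presentation is fine, but the passage ``by continuity and density of $\mathbb{Z}_{(p)}^\times \subset \mathbb{Z}_p^\times$'' hides real work: extending a family of $E_\infty$-maps indexed by a dense subset to a continuous action of the profinite group is not formal at the spectrum level, and in practice one \emph{deduces} continuity from the obstruction-theoretic identification rather than the other way around. Second, your description of the obstruction groups as Andr\'e--Quillen cohomology of $C(\mathbb{Z}_p^\times,\mathbb{Z}_p)$ is morally right but slightly compressed; the actual argument (as in \cite{GH} or \cite{Ell2}) shows the mapping space between Lubin--Tate spectra is discrete by proving the relevant cotangent complex vanishes, which at height one is exactly the formal \'etaleness you mention. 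With those caveats, your sketch matches what the cited references do.
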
 

We can now state the main K\"unneth-style theorem in the commutative case.   In fact, as the proof will show, the analogous statement also holds for non-commutative rings (minus the $E_\infty$-ring structure, of course).  Closely related results appear in  \cite{DM98, Mit00} (at least at the level
of homotopy groups). 

\begin{theorem} 
\label{Kunneththm}
Let $R$ be any commutative ring. 
Then there exists a natural, $\mathbb{Z}_p^{\times}$-equivariant equivalence of
$E_\infty$-rings
\[ L_{K(1)} (K (R \otimes_{\mathbb{Z}} \mathbb{Z}[\zeta_{p^\infty}])) \simeq
L_{K(1)} (K (R) \otimes KU_{\hat{p}}),  \]
where $\mathbb{Z}_p^{\times}$ acts on 
$\mathbb{Z}[\zeta_{p^\infty}]$ by Galois automorphisms and on 
$KU_{\hat{p}}$ as in \Cref{GHM}. 
\end{theorem}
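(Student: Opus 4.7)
The plan is to build the comparison map using Snaith's description of $KU$, and then prove it is an equivalence by reducing to the case of $\mathbb{Z}[1/p]$-algebras and invoking Galois descent along the cyclotomic tower.

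\textbf{Constructing the map.} The starting point is Snaith's theorem identifying $KU$ with $\Sigma^\infty_+ \mathbb{CP}^\infty[\beta^{-1}]$, where $\beta \in \pi_2$ is the Bott class. After $p$-completion, $KU_{\hat p}$ can be repackaged using the tower $B\mu_{p^n}$ together with a Bott class, in such a way that the Adams action of $\mathbb{Z}_p^\times$ (cf.~\Cref{GHM}) is visibly induced by the natural $\mathbb{Z}_p^\times = \mathrm{Aut}(\mu_{p^\infty})$-action. Now the coherent family $(\zeta_{p^n})_n$ determines a $\mathbb{Z}_p^\times$-equivariant map of abelian groups $\mu_{p^\infty} \to R[\zeta_{p^\infty}]^\times = \mathrm{GL}_1(R[\zeta_{p^\infty}])$ which, via the determinant, gives a compatible family of $\mathbb{Z}_p^\times$-equivariant $E_\infty$-ring maps $\Sigma^\infty_+ B\mu_{p^n} \to K(R[\zeta_{p^\infty}])_{\hat p}$. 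Passing to the colimit in $n$, smashing with $K(R) \to K(R[\zeta_{p^\infty}])$, and $K(1)$-localizing, the Bott class in the source becomes invertible: this is essentially the observation in the proof of \Cref{K1localTP} that $[\epsilon]-1$ and $d$ generate the same ideal up to $p$-power factors. Snaith's presentation then delivers the natural, $\mathbb{Z}_p^\times$-equivariant comparison map
\[ \Phi_R \colon L_{K(1)}(K(R) \otimes KU_{\hat p}) \longrightarrow L_{K(1)} K(R[\zeta_{p^\infty}]). \]

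\textbf{Reducing to Galois descent.} To show $\Phi_R$ is an equivalence, first apply \Cref{mainthmintro} to reduce to the case that $R$ is a $\mathbb{Z}[1/p]$-algebra. Both sides are then $\mathbb{Z}_p^\times$-equivariant $KU_{\hat p}$-modules in $K(1)$-local spectra (on the right via the Bott element just constructed), and $\Phi_R$ respects this structure. Since $KU_{\hat p}$ is a faithful $K(1)$-local $\mathbb{Z}_p^\times$-Galois extension of $L_{K(1)} \mathbb{S}$ in the sense of Rognes (\Cref{GHM}), $\Phi_R$ is an equivalence if and only if the induced map on $\mathbb{Z}_p^\times$-homotopy fixed points is. Using $KU_{\hat p}^{h\mathbb{Z}_p^\times} \simeq L_{K(1)} \mathbb{S}$ together with the Rognes-style descent equivalence $X \simeq (X \otimes_{L_{K(1)} \mathbb{S}} KU_{\hat p})^{h\mathbb{Z}_p^\times}$ for $K(1)$-local $X$, the LHS fixed points collapse to $L_{K(1)} K(R)$. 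The entire problem thus reduces to the Galois descent statement
\[ L_{K(1)} K(R) \xrightarrow{\ \sim\ } \bigl(L_{K(1)} K(R[\zeta_{p^\infty}])\bigr)^{h\mathbb{Z}_p^\times}. \]

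\textbf{The main obstacle.} The core of the proof is thus Galois descent for $L_{K(1)} K$-theory along the profinite cyclotomic cover $R \to R[\zeta_{p^\infty}]$. For each finite layer $R[\zeta_{p^n}]/R$, a finite étale Galois extension with group $(\mathbb{Z}/p^n)^\times$, descent is the established finite-Galois descent for $L_{K(1)} K$ on $\mathbb{Z}[1/p]$-algebras from \cite{CMNN}. The plan is to pass to the limit in $n$, using (i) that $L_{K(1)} K$ commutes with filtered colimits on $\mathbb{Z}[1/p]$-algebras, a consequence of its truncating property (cf.~\cite{LT}), so the value at $R[\zeta_{p^\infty}]$ is the colimit of the values at $R[\zeta_{p^n}]$; and (ii) that $\mathbb{Z}_p^\times$ has finite mod-$p$ cohomological dimension, ensuring the Bousfield--Kan spectral sequence for the continuous homotopy fixed points converges with vanishing $\mathrm{lim}^1$. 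The delicate step --- and the main technical hurdle I anticipate --- is identifying $\lim_n (L_{K(1)} K(R[\zeta_{p^n}]))^{h(\mathbb{Z}/p^n)^\times}$ with $(L_{K(1)} K(R[\zeta_{p^\infty}]))^{h\mathbb{Z}_p^\times}$, a careful continuous-versus-discrete homotopy fixed points comparison handling the profinite topology on $\mathbb{Z}_p^\times$.
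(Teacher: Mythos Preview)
Your overall strategy matches the paper's: construct the comparison via Snaith's presentation of $KU_{\hat p}$, reduce to $\mathbb{Z}[1/p]$-algebras using \Cref{mainthmintro}, and then prove the map is an equivalence by Galois descent along the cyclotomic tower, checking on homotopy fixed points. The paper in fact proves a more general statement (\Cref{locinvariantKunneth}) for any localizing invariant commuting with filtered colimits, using \'etale hyperdescent from \cite{CM} rather than layer-by-layer finite Galois descent from \cite{CMNN}.

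The substantive difference is exactly where you flag the obstacle. Rather than confronting continuous $\mathbb{Z}_p^\times$-homotopy fixed points directly, the paper replaces the profinite group by the dense discrete subgroup $\mu \times \mathbb{Z} \subset \mathbb{Z}_p^\times$ (where $\mu$ is the torsion subgroup), and proves two lemmas: Lemma~\ref{discrete2continuous} shows that hypercomplete $p$-complete sheaves on $B\mathbb{Z}_p^\times$ embed fully faithfully into $\mu\times\mathbb{Z}$-spectra with a continuity condition on mod-$p$ homotopy, and Proposition~\ref{enoughfixedpoints} shows that $L_{K(1)}(\,\cdot\,\otimes KU_{\hat p})$ is fully faithful into $(\mu\times\mathbb{Z})$-equivariant $KU_{\hat p}$-modules. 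This reduces the question to discrete $(\mu\times\mathbb{Z})$-fixed points, where the comparison with continuous cohomology is elementary and the $\lim^1$ issues you anticipate disappear (the $\mathbb{Z}$-factor has cohomological dimension~$1$, and $\mu$-fixed points are a retract of a finite limit).

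Two minor points: your justification for Bott invertibility via \Cref{K1localTP} is about $\TP$ rather than $K$-theory; the paper instead checks it directly after \'etale hyperdescent to separably closed fields using Suslin's computation. And $L_{K(1)}K$ commuting with this filtered colimit is not really a consequence of the truncating property from \cite{LT}; it follows because $K$-theory commutes with filtered colimits and one works in the $p$-complete category throughout.
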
 

In the above statements we are only considering $\mathbb{Z}_p^\times$ as a discrete group.  This
is for simplicity of exposition, but in fact we will also obtain the (appropriately formulated) analogous statements on the level of profinite groups, essentially as a \emph{consequence} of the statements on the level of discrete groups.  To accomplish this we will use the following lemma.  While the statement involves a non-canonical choice of $g\in\mathbb{Z}_p^\times$, in the end it will only be used to prove statements which are formulated independently of $g$.

\begin{lemma}\label{discrete2continuous}
Let $\mu$ denote the torsion subgroup of $\mathbb{Z}_p^\times$ (so $\mu=\mu_{p-1}$ for $p$ odd and $\mu=\mu_2$ for $p=2$).  Further let $g\in \mathbb{Z}_p^\times$ be an element which projects to a topological generator of the quotient $\mathbb{Z}_p^\times/\mu (\cong \mathbb{Z}_p)$, and consider the homomorphism $\mu\times \mathbb{Z}\rightarrow \mathbb{Z}_p^\times$ induced by the inclusion on the first factor and $1\mapsto g$ on the second factor.

Then the induced pullback functor
$$\pi^\ast:\operatorname{Sh}^{hyp}(B\mathbb{Z}_p^\times)\rightarrow \operatorname{PSh}(B(\mu\times \mathbb{Z}))$$
from hypercomplete sheaves of $p$-complete spectra on the site of finite continuous $\mathbb{Z}_p^\times$-sets to presheaves of $p$-complete spectra on the one-object groupoid $B(\mu\times \mathbb{Z})$ is fully faithful.  Moreover, its essential image consists of those $p$-complete spectra with $\mu\times \mathbb{Z}$-action whose (mod $p$) homotopy groups have the property that the action extends continuously to $\mathbb{Z}_p^\times$.
\end{lemma}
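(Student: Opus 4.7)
The plan is to reduce the statement to the corresponding claim about the dense inclusion $\mathbb{Z}\hookrightarrow\mathbb{Z}_p$ (where $g$ is a topological generator), then to verify full faithfulness and the essential image description by identifying both the continuous $\mathbb{Z}_p$-fixed point construction and the discrete $\mathbb{Z}$-fixed point construction with the fiber of $g-1$.

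For the reduction, note that $|\mu|$ is invertible in $\mathbb{Z}_p$, so the idempotents of $\mathbb{Z}_p[\mu]$ corresponding to characters $\chi\in\mathrm{Hom}(\mu,\mathbb{Z}_p^\times)$ canonically split any $p$-complete spectrum with $\mu$-action into isotypic components. Because $\mu$ sits as a subgroup both of $\mathbb{Z}_p^\times$ and of $\mu\times\mathbb{Z}$, this splitting is preserved by $\pi^\ast$. We may therefore reduce to proving the analogue of the lemma for $\mathbb{Z}\hookrightarrow\mathbb{Z}_p$, $1\mapsto g$, where $g$ projects to a topological generator of $\mathbb{Z}_p$.

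For full faithfulness in the reduced setting, given hypercomplete sheaves $\mathcal{F},\mathcal{G}$ of $p$-complete spectra on $B\mathbb{Z}_p$ with underlying spectra $F,G$, the mapping spectrum in $\operatorname{Sh}^{hyp}(B\mathbb{Z}_p)$ is $\mathrm{map}(F,G)^{h\mathbb{Z}_p,\mathrm{cts}}$, while on the presheaf side it is $\mathrm{map}(F,G)^{h\mathbb{Z}}$ with $\mathbb{Z}$ acting by $g$. I would identify both with $\mathrm{fib}(g-1)$ acting on the $p$-complete internal mapping spectrum $Z:=\mathrm{map}(F,G)$: the $\mathbb{Z}$-side is a standard consequence of the cofiber sequence $\mathbb{S}\to\mathbb{S}\to\Sigma_+^\infty B\mathbb{Z}$, and the continuous $\mathbb{Z}_p$-side uses that $\mathbb{Z}_p$ is topologically procyclic with generator $g$, so that after $p$-completion the continuous fixed points collapse to the same fiber sequence.

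For the essential image, given a $p$-complete spectrum $X$ with $\mathbb{Z}$-action (via $g$) such that the induced action on $\pi_\ast(X/p)$ extends continuously to $\mathbb{Z}_p$, one would construct a hypercomplete sheaf $\mathcal{F}$ on $B\mathbb{Z}_p$ with $\mathcal{F}(\ast)=X$ by setting $\mathcal{F}(\mathbb{Z}_p/p^n\mathbb{Z}_p):=\mathrm{fib}(g^{p^n}-1\colon X\to X)$ with its residual $\mathbb{Z}_p/p^n$-action. The main obstacle is verifying hyperdescent for this assignment: one must check that the continuity hypothesis on mod $p$ homotopy is exactly what guarantees that the tower $\{\mathcal{F}(\mathbb{Z}_p/p^n\mathbb{Z}_p)\}_n$ defines a sheaf, and conversely that every hypercomplete sheaf is recovered from its underlying spectrum-with-automorphism. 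In the end this reduces to the statement that, for a $p$-complete spectrum, a continuous $\mathbb{Z}_p$-action is detected by and can be reconstructed from its restriction to the topologically dense subgroup $\mathbb{Z}$ together with the stated mod $p$ continuity.
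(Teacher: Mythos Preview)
Your overall strategy is reasonable, but there are two genuine gaps.

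First, the reduction step fails at $p=2$: there $\mu=\{\pm 1\}$ has order $2$, so $|\mu|$ is not invertible in $\mathbb{Z}_2$ and the idempotents of $\mathbb{Z}_p[\mu]$ you invoke do not exist. One cannot split a $2$-complete spectrum with $\mu_2$-action into isotypic pieces. The paper never attempts to strip off $\mu$; it works with $\mu\times\mathbb{Z}\to\mathbb{Z}_p^\times$ throughout and simply passes to the cofinal family of subgroups $p^n\mathbb{Z}\subset\mu\times\mathbb{Z}$ when computing.

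Second, the key analytic input is missing from both halves of your argument, and it is the same input in each case. For full faithfulness you assert that ``the continuous $\mathbb{Z}_p$-side \dots\ collapses to the same fiber sequence,'' but this is precisely what has to be proved. Unwinding the adjunction, full faithfulness of $\pi^\ast$ is equivalent (via conservativity of $\pi^\ast$, which is where hypercompleteness enters) to the counit $\pi^\ast\pi_\ast M\to M$ being an equivalence for $M$ in the target subcategory; concretely,
\[
\varinjlim_n M^{h(p^n\mathbb{Z})}\longrightarrow M
\]
must be a (mod $p$) equivalence under the continuity hypothesis. You recognize this same map as the ``main obstacle'' in your essential-image discussion (your proposed sheaf has $\mathcal{F}(\mathbb{Z}_p/p^n\mathbb{Z}_p)=\mathrm{fib}(g^{p^n}-1)=M^{h(p^n\mathbb{Z})}$, and the stalk condition is exactly the displayed colimit), but you do not carry it out. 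The paper does: after reducing mod $p$ it runs a d\'evissage on the Postnikov tower to reduce to $M$ concentrated in one degree and fixed by all sufficiently small open subgroups, then checks the colimit by hand---the degree-$0$ part is immediate, and in degree $1$ the bonding maps are eventually multiplication by $p$, hence the colimit vanishes. Once that computation is done, conservativity of $\pi^\ast$ yields full faithfulness and the essential image simultaneously; treating them separately, as you do, only obscures that they rest on the same lemma.
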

\begin{proof}
The pullback functor is associated to a geometric morphism of topoi, and hence
commutes with (mod $p$) homotopy group sheaves.  Thus the pullback functor
lands in the claimed full subcategory by the usual equivalence between abelian
groups sheaves on $B\mathbb{Z}_p^\times$ and abelian groups with continuous
$\mathbb{Z}_p^\times$-action.  Similarly, the pullback functor detects
equivalences, as the hypercompleteness lets us check this on (mod $p$) homotopy
group objects.  Thus it suffices to show that if $M$ is a $p$-complete spectrum
with $\mu\times \mathbb{Z}$-action whose induced action on (mod $p$) homotopy
groups extends continuously to $\mathbb{Z}_p^\times$, then $\pi^\ast \pi_\ast M\overset{\sim}{\rightarrow} M$.  This can be checked on underlying $p$-complete spectra, where it unwinds to the claim that
$$\varinjlim_H M^{h( H\cap (\mu\times \mathbb{Z}))}\rightarrow M$$
is a (mod $p$) equivalence.  Here $H$ runs over all open subgroups of
$\mathbb{Z}_p^\times$ and the superscript stands for homotopy fixed points,
compare \cite[Sec.~4.1]{CM}.  Passing to a cofinal collection of $H$'s, the above map is equivalent to
$$\varinjlim_n M^{h(p^n\mathbb{Z})}\rightarrow M.$$
Replacing $M$ by $M/p$, we may as well assume that $M$ is annihilated by a
power of $p$, in which case the condition is equivalent to demanding that the
action on the homotopy of $M$ admits a continuous extension to
$\mathbb{Z}_p^\times$, or equivalently is the union of subgroups fixed by some
$H$.  As the colimit is filtered, and the limit is uniformly finite, we can
then run a d\'evissage on the Postnikov tower of $M$ and reduce to the case where $M$ is concentrated in a single degree, which may as well be degree $0$, and there again we can assume that $M$ is fixed by all sufficiently small $H$.  It follows that the map is an equivalence in degree $0$.  In degree $1$, analyzing the colimit on the left we find that all the terms identify with $M$ but the bonding maps eventually identify with multiplication by $p$.  As $M$ is $p$-torsion the colimit gives $0$, as required. \end{proof}

We now construct the map which will implement the equivalence of 
\Cref{Kunneththm}. 
Let $\mu_{p^\infty} \subset \mathbb{Z}[\zeta_{p^\infty}]^{\times}$ be the
subgroup of $p$-power roots of unity, so $\mu_{p^\infty} \simeq
\mathbb{Q}_p/\mathbb{Z}_p.$ 
Consider the classifying space $B \mu_{p^\infty}$ as an infinite loop space; we have therefore the $E_\infty$-ring  $\Sigma^\infty_+ B \mu_{p^\infty}$.
Since $\mathbb{Z}_{p}^{\times}$ acts on $\mu_{p^\infty}$ via Galois
automorphisms, we obtain a $\mathbb{Z}_p^{\times}$-action on 
$\Sigma^\infty_+ B \mu_{p^\infty}$. 

\begin{construction}
We have a
$\mathbb{Z}_p^{\times}$-equivariant map 
of $E_\infty$-rings
\[  \psi: \left( \Sigma^\infty_+ B \mu_{p^\infty}\right)_{\hat{p}} \to K(
\mathbb{Z}[\zeta_{p^\infty}])_{\hat{p}} \]
since for any commutative ring $R$ we have a natural map $\Sigma^\infty_+
BR^{\times} \to K(R)$. 
Moreover, the source,
which is homotopy equivalent to $\left( \Sigma^\infty_+  B S^1
\right)_{\hat{p}} \simeq ( \Sigma^\infty_+ K( \mathbb{Z}_p, 2))_{\hat{p}}$, 
contains the natural Bott class $\beta \in \pi_2$, which is invariant under the
$\mathbb{Z}_p^\times$-action up to unit multiple. \end{construction}

\begin{proposition} 
$\psi$ carries $\beta$ to an invertible element in $\pi_2 L_{K(1)} ( K(
\mathbb{Z}[\zeta_{p^\infty}]))$.
\end{proposition}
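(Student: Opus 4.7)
The plan is to reduce invertibility of $\psi(\beta)$ to the well-known invertibility of the Bott element in $KU_{\hat{p}}$, by showing that $L_{K(1)}\psi$ factors through an $E_\infty$-ring map $KU_{\hat{p}} \to L_{K(1)} K(\mathbb{Z}[\zeta_{p^\infty}])$ via Snaith's theorem.

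The key step is to identify $L_{K(1)}(\Sigma^\infty_+ B\mu_{p^\infty})_{\hat{p}}$ with $KU_{\hat{p}}$ as $E_\infty$-rings, with the natural Bott class $\beta$ corresponding to the ordinary Bott element of $KU_{\hat{p}}$. Using the $p$-complete equivalence $(\Sigma^\infty_+ B\mu_{p^\infty})_{\hat{p}} \simeq (\Sigma^\infty_+ BS^1)_{\hat{p}}$ already recorded in the preceding construction, this reduces to showing $L_{K(1)}(\Sigma^\infty_+ BS^1) \simeq KU_{\hat{p}}$. For this I would invoke Snaith's theorem $(\Sigma^\infty_+ BS^1)[\beta^{-1}] \simeq KU$; it then suffices to show that the localization map $\Sigma^\infty_+ BS^1 \to (\Sigma^\infty_+ BS^1)[\beta^{-1}]$ becomes an equivalence after $L_{K(1)}$, which in turn follows from the fact that $\beta$ acts invertibly on the Morava $K(1)$-homology of $\Sigma^\infty_+ BS^1$, thanks to the standard identity $\beta^{p-1} = u \cdot v_1$ (for some unit $u$) in any complex-oriented spectrum.

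Granted this identification, $L_{K(1)}\psi$ becomes an $E_\infty$-ring map $\phi \colon KU_{\hat{p}} \to L_{K(1)} K(\mathbb{Z}[\zeta_{p^\infty}])$ sending the Bott element $\beta$ to $\psi(\beta)$. Since $\beta \cdot \beta^{-1} = 1$ in $KU_{\hat{p}}$, applying the ring homomorphism $\phi$ yields $\phi(\beta) \cdot \phi(\beta^{-1}) = \phi(1) = 1$ in $L_{K(1)} K(\mathbb{Z}[\zeta_{p^\infty}])$, so $\psi(\beta)$ is invertible, as desired.

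The principal obstacle is the $K(1)$-local identification $L_{K(1)}(\Sigma^\infty_+ B\mu_{p^\infty})_{\hat{p}} \simeq KU_{\hat{p}}$ together with the correct matching of Bott classes; once this is established (via Snaith's theorem and the $p$-complete equivalence with $(\Sigma^\infty_+ BS^1)_{\hat{p}}$), the rest of the proof is purely formal, using only that $E_\infty$-ring homomorphisms preserve units.
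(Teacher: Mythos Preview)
Your argument has a genuine gap: the claimed identification $L_{K(1)}(\Sigma^\infty_+ BS^1)_{\hat p} \simeq KU_{\hat p}$ is \emph{false}, so the proof strategy cannot work as stated. The Snaith map $\Sigma^\infty_+ BS^1 \to KU$ is not a $K(1)$-equivalence; equivalently, $\beta$ does not act invertibly on $K(1)_*(\Sigma^\infty_+ BS^1)$.

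Concretely, identify $(KU/p)_0(BS^1_+)$ (under the Pontryagin product) with integer-valued polynomials mod $p$, i.e.\ with locally constant functions $\mathbb{Z}_p \to \mathbb{F}_p$, via $b_n \leftrightarrow \binom{w}{n}$. The Hurewicz image of $\beta$ is $b_1 \leftrightarrow w$, and multiplication by $\beta$ is multiplication by the function $w \bmod p$, which kills every function supported on $p\mathbb{Z}_p$. In fact $\beta^{p-1}$ corresponds to $w^{p-1}$, which by Fermat is the characteristic function of $\mathbb{Z}_p^\times$: a nontrivial \emph{idempotent}, not a unit. The relation ``$\beta^{p-1} = u\cdot v_1$'' you invoke holds in $\pi_*(KU/p)$, i.e.\ \emph{after} applying the Snaith map, but it does not hold for the Pontryagin action of $\beta$ on $K(1)_*(\Sigma^\infty_+ BS^1)$; you have conflated the class $\beta \in \pi_2(\Sigma^\infty_+ BS^1)$ with the Bott periodicity element of the coefficient theory.

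This is exactly why the paper does not argue on the source. Instead, it checks invertibility in the \emph{target}: by \'etale hyperdescent, \Cref{mainthmintro}, and Gabber--Suslin rigidity, one may test after mapping to $L_{K(1)} K(k)$ for a separably closed field $k$ of characteristic $\neq p$, where Suslin's theorem identifies $K(k)_{\hat p}$ with $ku_{\hat p}$ and $\psi(\beta)$ with the usual Bott class. Only \emph{after} this proposition does the paper invert $\beta$ to obtain the map out of $KU_{\hat p}$; your approach would make that step circular even if it worked.
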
 
\begin{proof} 
By \'etale hyperdescent for $K(1)$-local $K$-theory \cite{Tho85},  \Cref{mainthmintro}, and
Gabber--Suslin rigidity \cite{Gabber}, it suffices to verify this
after composing to $\pi_2 L_{K(1)} ( K( k))$, where $k$ is any  
separably closed field of characteristic $\neq p$ over
$\mathbb{Z}[\zeta_{p^\infty}]$. However, this follows from Suslin's description
\cite{Suslin} of $K(k)_{\hat{p}}$ in this case. In particular, $\pi_* ( L_{K(1)} K(k)) $ 
is a Laurent polynomial algebra on $\beta$. 
\end{proof}

We use now the following fundamental result of Snaith \cite{Snaith} which gives a
description of $KU$ via the above constructions (here we only use the
$p$-complete case). 
See also \cite[Sec.~6.5]{Ell2} for a different proof. 
\begin{theorem}[Snaith] 
\label{snaiththm}
The induced map $((\Sigma^\infty_+ B\mu_{p^\infty})_{\hat{p}}[\beta^{-1}])_{\hat{p}}\rightarrow KU_{\widehat{p}}$ is an equivalence.
\end{theorem}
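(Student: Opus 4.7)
The plan is to deduce this from the classical Snaith theorem $\Sigma^\infty_+ \mathbb{CP}^\infty[\beta^{-1}] \simeq KU$, by identifying the source with the $p$-completion of the classical Snaith spectrum.

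First, we establish a $p$-adic equivalence of spaces $B\mu_{p^\infty} \simeq (\mathbb{CP}^\infty)_{\hat{p}}$. The short exact sequence of discrete abelian groups
\[ 0 \to \mathbb{Z}_p \to \mathbb{Q}_p \to \mathbb{Q}_p/\mathbb{Z}_p \to 0, \]
delooped twice, gives a fiber sequence $K(\mathbb{Q}_p/\mathbb{Z}_p, 1) \to K(\mathbb{Z}_p, 2) \to K(\mathbb{Q}_p, 2)$ whose connecting (Bockstein) map identifies $B\mu_{p^\infty} = K(\mathbb{Q}_p/\mathbb{Z}_p, 1)$ with the fiber. Since $K(\mathbb{Q}_p, 2)$ is a $\mathbb{Q}$-Eilenberg--MacLane space and hence mod $p$ trivial, the Bockstein $B\mu_{p^\infty} \to K(\mathbb{Z}_p, 2) \simeq (\mathbb{CP}^\infty)_{\hat{p}}$ is a $p$-adic equivalence of spaces. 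Passing to suspension spectra and $p$-completing gives an equivalence of $E_\infty$-rings
\[ (\Sigma^\infty_+ B\mu_{p^\infty})_{\hat{p}} \simeq (\Sigma^\infty_+ \mathbb{CP}^\infty)_{\hat{p}} \]
carrying the Bott class $\beta$ on the left to the standard generator of $H_2(\mathbb{CP}^\infty; \mathbb{Z})$ on the right.

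Next, we invoke the classical Snaith theorem: the tautological $E_\infty$-ring map $\Sigma^\infty_+ \mathbb{CP}^\infty[\beta^{-1}] \to KU$, arising from the first Chern class of the universal line bundle, is an equivalence. Applying $p$-completion and using that $p$-completion commutes with Bott inversion up to a final outer $p$-completion, we obtain
\[ ((\Sigma^\infty_+ \mathbb{CP}^\infty)_{\hat{p}}[\beta^{-1}])_{\hat{p}} \simeq KU_{\hat{p}}. \]
Combining with the identification of the previous paragraph, the map in the statement becomes precisely this $p$-completed Snaith equivalence, which completes the proof.

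The main obstacle is the first step: carefully checking that the identification of suspension spectra above is compatible with both the $E_\infty$-ring structures and the two incarnations of the Bott class (the one given by the roots of unity on the left and the one coming from $H_2(\mathbb{CP}^\infty)$ on the right). Once this compatibility is in place, the classical Snaith theorem---which can itself be proved via the universal property of $\Sigma^\infty_+ \mathbb{CP}^\infty[\beta^{-1}]$ as the free $E_\infty$-ring on an invertible degree-two class in the Picard spectrum, or by explicit mod-$p$ homology comparison---immediately yields the conclusion.
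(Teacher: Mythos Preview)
The paper does not actually prove this theorem: it is stated as a citation to Snaith's original result, with a pointer to Lurie's \emph{Elliptic Cohomology II} for an alternative argument. So there is no ``paper's proof'' to compare against beyond the bare reference.

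Your deduction from the classical Snaith theorem is correct and is in fact the intended reduction. Note that the paper already records the key step in the Construction immediately preceding the theorem: the source is ``homotopy equivalent to $(\Sigma^\infty_+ BS^1)_{\hat p} \simeq (\Sigma^\infty_+ K(\mathbb{Z}_p,2))_{\hat p}$.'' Your Bockstein argument is one clean way to see this, and your observation that $p$-completion commutes with inverting $\beta$ up to an outer $p$-completion (because the map $\Sigma^\infty_+\mathbb{CP}^\infty \to (\Sigma^\infty_+\mathbb{CP}^\infty)_{\hat p}$ is a mod~$p$ equivalence, hence so is the filtered colimit along $\beta$) is exactly what is needed to transport the integral Snaith equivalence to the $p$-complete statement.

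Two small points of precision. First, the Bott class $\beta$ lives in $\pi_2$, not $H_2$; what you want to say is that under the Hurewicz map the two classes correspond, and that the identification of suspension spectra carries one generator of $\pi_2$ to the other (up to a $p$-adic unit). Second, the map $\Sigma^\infty_+\mathbb{CP}^\infty \to KU$ comes from the tautological line bundle viewed as a class in $K$-theory, not from its first Chern class; this is only a terminological slip. Neither affects the validity of the argument.
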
 

This furnishes a potentially different $\mathbb{Z}_p^\times$-action on $KU_{\widehat{p}}$ from that of Theorem \ref{GHM}; but in fact it must be the same, as it does the same thing on $\pi_2$.

\begin{construction} 
We obtain a $\mathbb{Z}_p^{\times}$-equivariant map of $E_\infty$-rings
\[ KU_{\hat{p}} \to L_{K(1)} K( \mathbb{Z}[\zeta_{p^\infty}])  \]
obtained from the map $\psi$ by inverting the class $\beta$ (in the $p$-complete
sense) and using \Cref{snaiththm}. Consequently, we obtain a natural $\mathbb{Z}_p^{\times}$-equivariant map
for any $R$, 
\begin{equation} L_{K(1)} (K( R ) \otimes KU_{\hat{p}}) \to L_{K(1)} K( R \otimes_{\mathbb{Z}}
\mathbb{Z}[\zeta_{p^\infty}]). \label{Kunneth} \end{equation}

\end{construction} 

Let us pause and explain how to promote this to an equivariant map for the \emph{profinite} $\mathbb{Z}_p^\times$, formulated as in Lemma \ref{discrete2continuous}  in terms of hypercomplete sheaves on the topos $B\mathbb{Z}_p^\times$ of finite continuous $\mathbb{Z}_p^\times$ sets.

\begin{lemma}
Consider $\mu_{p^\infty}$ as equipped with its continuous action of $\mathbb{Z}_p^\times$, hence as an abelian group sheaf on $B\mathbb{Z}_p^\times$.  Thus $\Sigma^\infty_+ B\mu_{p^\infty}$ promotes to a sheaf of $E_\infty$-ring spectra on $B\mathbb{Z}_p^\times$.  Then there exists an initial $p$-complete hypercomplete sheaf of $E_\infty$-ring spectra $KU_{\hat{p}}$ on $B\mathbb{Z}_p^\times$ equipped with a map
$$\Sigma^\infty_+ B\mu_{p^\infty}\rightarrow KU_{\hat{p}}$$
satisfying the property that on underlying spectra (meaning, after pulling back to the basepoint $\ast\rightarrow B\mathbb{Z}_p^\times$) it carries $\beta$ to an invertible element.  Moreover, on underlying spectra this $KU_{\hat{p}}$ identifies with the usual $KU_{\hat{p}}$ and the map identifies with the usual one.
\end{lemma}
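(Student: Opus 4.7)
The plan is to apply \Cref{discrete2continuous} to translate the problem into a Bott localization among $(\mu\times\mathbb{Z})$-equivariant $p$-complete $E_\infty$-rings, and then descend the result back to a sheaf on $B\mathbb{Z}_p^\times$ using full faithfulness of $\pi^\ast$. After fixing $g$ as in \Cref{discrete2continuous}, we embed the $\infty$-category $\mathcal{A}$ of $p$-complete hypercomplete sheaves of $E_\infty$-ring spectra on $B\mathbb{Z}_p^\times$ fully faithfully, via $\pi^\ast$, into the $\infty$-category $\mathcal{A}'$ of $(\mu\times\mathbb{Z})$-equivariant $p$-complete $E_\infty$-rings, with essential image cut out by the continuity condition on mod-$p$ homotopy. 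The sheaf $\Sigma^\infty_+ B\mu_{p^\infty}$ corresponds under $\pi^\ast$ to the usual $E_\infty$-ring equipped with the $(\mu\times\mathbb{Z})$-action restricted from the tautological Galois action of $\mathbb{Z}_p^\times$ on $\mu_{p^\infty}$.

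Next, I would form the equivariant $p$-complete Bott localization of $\Sigma^\infty_+ B\mu_{p^\infty}$ inside $\mathcal{A}'$. Since $\beta$ is permuted up to units by the action, inverting a suitable invariant power of $\beta$ gives the same underlying answer as inverting $\beta$ itself; more abstractly, the reflective localization $L_{K(1)}$ on non-equivariant $p$-complete $E_\infty$-rings lifts to a reflective localization of $\mathcal{A}'$ by applying it underlyingly. Because the forgetful functor from $\mathcal{A}'$ to $p$-complete $E_\infty$-rings preserves filtered colimits and limits, this localization has underlying $E_\infty$-ring exactly the Snaith $KU_{\hat{p}}$ of \Cref{snaiththm}. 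By construction, it is initial among objects of $\mathcal{A}'$ equipped with a $(\mu\times\mathbb{Z})$-equivariant map from $\Sigma^\infty_+ B\mu_{p^\infty}$ whose underlying $\beta$ becomes a unit.

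Finally, I would descend to $\mathcal{A}$ and conclude initiality. The mod-$p$ homotopy of this equivariant $KU_{\hat{p}}$ is the Laurent algebra $\mathbb{F}_p[\beta^{\pm 1}]$ with $\mathbb{Z}_p^\times$ acting on $\beta^n$ by the $n$th power of the tautological character; since $1+p\mathbb{Z}_p$ lies in the kernel of $\mathbb{Z}_p^\times\to\mathbb{F}_p^\times$, each homotopy class is stabilized by an open subgroup, verifying the continuity criterion of \Cref{discrete2continuous}. Hence the equivariant $KU_{\hat{p}}$ descends to a sheaf $KU_{\hat{p}}\in\mathcal{A}$, and the Snaith map descends likewise by full faithfulness. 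Initiality in $\mathcal{A}$ is then immediate: any $F\in\mathcal{A}$ equipped with the required data becomes, after applying $\pi^\ast$, an object of $\mathcal{A}'$ with the same data, admitting a unique factorization through the equivariant $KU_{\hat{p}}$, which descends uniquely by full faithfulness. The most delicate point is checking that the Bott localization lands in the essential image of $\pi^\ast$, and this is exactly the continuity computation above: inverting a Bott class that transforms under a continuous character preserves continuity of the action on mod-$p$ homotopy.
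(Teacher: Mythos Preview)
Your proposal is correct and follows essentially the same strategy as the paper: transfer the problem along the fully faithful embedding of \Cref{discrete2continuous} to $(\mu\times\mathbb{Z})$-equivariant $p$-complete $E_\infty$-rings, invoke the equivariant Snaith presentation to build the Bott localization there, and then descend using the continuity criterion on mod-$p$ homotopy. The paper's proof is extremely terse (essentially the single sentence ``transfer via \Cref{discrete2continuous} and use the equivariance of the Snaith identification''), and you have simply spelled out the details that this sentence encodes, including the explicit continuity check and the deduction of initiality from full faithfulness.
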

\begin{proof}
We choose a $g\in\mathbb{Z}_p^\times$ as in Lemma \ref{discrete2continuous} in
order to transfer this to the analogous claim for presheaves on $B(\mu\times
\mathbb{Z})$.  But then it is a consequence of the equivariance of the Snaith identification, explained above.
\end{proof}

Now we recall that $L_{K(1)} K(\mathbb{Z}[\zeta_{p^\infty}])=L_{K(1)}K(\mathbb{Z}[1/p,\zeta_{p^\infty}])$ promotes to a hypercomplete sheaf on $B\mathbb{Z}_p^\times$, by Thomason's hyperdescent theorem applied to the $p$-cyclotomic tower.  Moreover the map $B\mu_{p^\infty}\rightarrow \Omega^\infty K(\mathbb{Z}[1/p,\zeta_{p^\infty}])$ used to define $\psi$ comes from the finite level maps $B\mu_{p^n}\rightarrow  \Omega^\infty K(\mathbb{Z}[1/p,\zeta_{p^n}])$ and hence $\psi$ promotes to a map of sheaves of $E_\infty$-ring spectra on $B\mathbb{Z}_p^\times$.  Thus the above lemma does promote our naive discrete $\mathbb{Z}_p^\times$-equivariant map
$$KU_{\widehat{p}}\rightarrow L_{K(1)} K( \mathbb{Z}[\zeta_{p^\infty}])$$
to an honest one.  The claim that such a map (or one derived from it such as \eqref{Kunneth}) is an equivalence is independent of whether we think of $\mathbb{Z}_p^\times$ as a discrete or profinite group, since equivalences of hypercomplete sheaves over $B\mathbb{Z}_p^\times$ can be checked on pullback to the basepoint.

Let us formally record this more refined construction, and its fundamental property, in the following.

\begin{theorem}
\label{K1localKthy}
Let $KU_{\hat{p}}$ denote the hypercomplete $p$-complete sheaf of
$E_\infty$-algebras on $B\mathbb{Z}_p^\times$ constructed in the previous
lemma.  Let also $\pi \colon \operatorname{Spec}(\mathbb{Z}[1/p])_{et}\rightarrow B\mathbb{Z}_p^\times$ be the geometric morphism of topoi encoding the $p$-cyclotomic extension.  Then there is a natural comparison map $\pi^\ast KU_{\widehat{p}}\rightarrow L_{K(1)}K(-)$ of sheaves of $E_\infty$-rings.

Furthermore, suppose that $X$ is an algebraic space over $\mathbb{Z}[1/p]$ of finite Krull dimension with a uniform bound on the virtual (mod $p$) Galois cohomological dimension of its residue fields.  Then for $\pi_X:X_{et}\rightarrow B\mathbb{Z}_p^\times$ the composition of $\pi$ with the natural projection $X\rightarrow\operatorname{Spec}(\mathbb{Z}[1/p])$, the induced comparison map
$$\pi_X^\ast KU_{\widehat{p}}\rightarrow L_{K(1)}K(-)$$
identifies the target as the $p$-completion of the hypercompletion of the source.
\end{theorem}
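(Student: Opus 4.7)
The plan is to construct the map from the material already in hand and then verify the identification by reducing to a stalkwise computation enabled by the cohomological dimension bound.

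First I will construct the comparison map. The lemma preceding the theorem already gives $KU_{\hat{p}}$ as a hypercomplete $p$-complete sheaf of $E_\infty$-rings on $B\mathbb{Z}_p^\times$ together with a $\mathbb{Z}_p^\times$-equivariant map $KU_{\hat{p}} \to L_{K(1)} K(\mathbb{Z}[\zeta_{p^\infty}])$. By Thomason's hyperdescent theorem (combined with \Cref{mainthmintro} to pass between $\mathbb{Z}[\zeta_{p^\infty}]$ and $\mathbb{Z}[1/p, \zeta_{p^\infty}]$), $L_{K(1)} K(-)$ is a hypercomplete sheaf of $E_\infty$-rings on $\spec(\mathbb{Z}[1/p])_{et}$. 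Adjunction along $\pi$ then turns the $\mathbb{Z}_p^\times$-equivariant map into a sheaf map $\pi^* KU_{\hat{p}} \to L_{K(1)} K(-)$ on $\spec(\mathbb{Z}[1/p])_{et}$; pulling back along $X \to \spec(\mathbb{Z}[1/p])$ yields the desired comparison $\pi_X^* KU_{\hat{p}} \to L_{K(1)} K(-)$ on $X_{et}$.

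Next I would verify the identification. The target is $p$-complete by construction and hypercomplete on $X_{et}$ under the finiteness hypothesis (Thomason plus bounded cohomological dimension), so the comparison map factors through the $p$-completed hypercompletion of $\pi_X^* KU_{\hat{p}}$, and it suffices to show the factored map is an equivalence. Under the uniform bound on virtual (mod $p$) Galois cohomological dimension, hypercomplete sheaves of $p$-complete spectra on $X_{et}$ are detected on stalks at geometric points. At such a point $\spec(\bar{k}) \to X$, the residue field $\bar{k}$ is separably closed of characteristic $\neq p$, the stalk of the source is $KU_{\hat{p}}$ by construction, and the stalk of the target is $L_{K(1)} K(\bar{k}) \simeq KU_{\hat{p}}$ by Gabber--Suslin rigidity. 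That the comparison map implements this identification then follows by unwinding the Snaith construction (\Cref{snaiththm}): it takes the Bott class on the source to the Bott class on the target.

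The main obstacle is the interaction between pullback and hypercompletion: $\pi_X^* KU_{\hat{p}}$ is a pullback of a hypercomplete sheaf, but its sheafification on $X_{et}$ need not itself be hypercomplete, nor in general can stalkwise equivalences of $p$-complete \'etale sheaves be promoted to global equivalences. The role of the uniform cohomological dimension bound is precisely to force Postnikov towers of $p$-complete \'etale hypersheaves to converge uniformly in $X$, so that such hypersheaves are genuinely determined by their stalks. Granted this finiteness, the whole argument is a sheaf-theoretic repackaging of the K\"unneth formula (\Cref{Kunneththm}): after cyclotomic descent, $L_{K(1)} K(-)$ becomes locally equivalent to $KU_{\hat{p}}$ with its Galois action, which is exactly what $\pi_X^* KU_{\hat{p}}$ encodes.
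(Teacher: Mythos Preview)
Your proposal is correct and follows essentially the same route as the paper: the comparison map is the one built in the discussion preceding the theorem, and the identification is verified by using the cohomological dimension hypothesis (via Thomason's hyperdescent in the form of \cite{CM}) to reduce to stalks, then invoking Gabber--Suslin rigidity to pass to separably closed residue fields, where the claim is Suslin's computation combined with Snaith's presentation. The paper phrases the reduction as ``strictly henselian local rings, then separably closed fields'' rather than ``geometric points,'' but this is the same thing; your added remarks on the pullback/hypercompletion interaction are accurate elaborations rather than a different argument.
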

\begin{proof}
The first statement was proved in the discussion just before.  For the second
statement, by Thomason's hyperdescent theorem in the general form proved in
\cite{CM}, it suffices to check this on strictly henselian local rings; by
Gabber--Suslin rigidity, we can even reduce to separably closed fields $k$.  Then this encodes the combination of Suslin's identification of $K(k)_{\widehat{p}}$ with Snaith's presentation of $KU_{\widehat{p}}$, as already explained above.
\end{proof}

When $R$ is commutative, 
one can use similar arguments to directly check that \eqref{Kunneth} is an equivalence.
However, we actually prove below a more general statement for arbitrary localizing invariants over
$\mathbb{Z}[1/p]$, which we formulate next. 
Let $R$ be a commutative $\mathbb{Z}[1/p]$-algebra and let $E$ be a localizing invariant for $R$-linear
$\infty$-categories (in the sense of \cite{BGT}) which commutes with filtered
colimits. Since everything is linear over algebraic $K$-theory, 
we obtain as well from \eqref{Kunneth} a natural $\mathbb{Z}_p^{\times}$-equivariant map 
\begin{equation}  \label{Emap} L_{K(1)} ( E(R) \otimes KU_{\hat{p}}) \to L_{K(1)} (
E(R[\zeta_{p^\infty}]))  , \end{equation}
which we will show to be an equivalence. 
 
To do this, we will need to use a type of Galois descent for the profinite
group $\mathbb{Z}_p^{\times}$; recall that $L_{K(1)} S^0 \to KU_{\hat{p}}$ is a
pro-Galois extension for the profinite group $\mathbb{Z}_p^{\times}$
in the sense studied by Rognes \cite{Rog08}. 
From this, one can obtain a type of Galois descent with respect to the
profinite group $\mathbb{Z}_p^{\times}$; here we formulate an equivalent primitive
version using the dense discrete subgroup $\mu \times \mathbb{Z}
\subset \mathbb{Z}_p^{\times}$ as in Lemma \ref{discrete2continuous}. 

First, the $\mathbb{Z}_p^{\times}$-action on $KU_{\hat{p}}$ yields a functor
\begin{equation} \label{KUfun} L_{K(1)}( \cdot \otimes KU_{\hat{p}}) :
L_{K(1)} \mathrm{Sp} \to \md_{L_{K(1)} \sp}( KU_{\hat{p}})^{h ( \mu\times \mathbb{Z})},\end{equation}

\begin{proposition} 
\label{enoughfixedpoints}
The natural functor \eqref{KUfun}
is fully faithful, and the essential image is spanned by those such that on mod
$p$ homotopy groups, the stabilizer of any element under the
$\mathbb{Z}$-action contains $p^N \mathbb{Z}$ for $N \gg 0$. 
\end{proposition}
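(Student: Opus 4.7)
The plan is to realize the functor \eqref{KUfun} as the composition of two equivalences onto an explicit full subcategory, using (i) Rognes' pro-Galois descent for $L_{K(1)} S^0 \to KU_{\hat{p}}$ and (ii) a $KU_{\hat{p}}$-linear enhancement of Lemma \ref{discrete2continuous}. Concretely, the functor should factor as
\[ L_{K(1)} \sp \xrightarrow{\sim} \mathrm{Sh}^{hyp}\!\left(B\mathbb{Z}_p^\times; \md(KU_{\hat{p}})_{\hat{p}}\right) \hookrightarrow \md_{L_{K(1)}\sp}(KU_{\hat{p}})^{h(\mu \times \mathbb{Z})}, \]
where the first arrow is base change along the pro-Galois extension and the second is pullback along the map $B(\mu \times \mathbb{Z}) \to B\mathbb{Z}_p^\times$ induced by the dense inclusion.

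For the first arrow I would invoke the Rognes descent theorem \cite{Rog08} in the hypercomplete-sheaf packaging used in \cite[Sec.~4]{CM}: since $L_{K(1)} S^0 \to KU_{\hat{p}}$ is a faithful pro-Galois extension, base change identifies $L_{K(1)} \sp$ with the $\infty$-category of $p$-complete hypercomplete sheaves of $KU_{\hat{p}}$-modules on the site $B\mathbb{Z}_p^\times$. For the second arrow I would apply the $KU_{\hat{p}}$-linear analogue of Lemma \ref{discrete2continuous}; its proof goes through unchanged because the forgetful functor to $p$-complete spectra detects equivalences, hypercompleteness, and homotopy groups. This gives both fully faithfulness and identifies the essential image with those $KU_{\hat{p}}$-modules equipped with a $\mu \times \mathbb{Z}$-action whose mod $p$ homotopy groups support a $\mathbb{Z}_p^\times$-continuous extension of the action.

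To match the essential image description in the proposition, I would then unpack this continuity condition. Since $\mu$ is finite, its action is automatically continuous for any topology, so only the $\mathbb{Z}$-action is at issue, and this is to be checked under the dense embedding $n \mapsto g^n$. On a discrete $p$-torsion abelian group, extending continuously to $\mathbb{Z}_p^\times$ via this embedding is equivalent to every element having open stabilizer in $\mathbb{Z}_p^\times$, i.e.\ having $\mathbb{Z}$-stabilizer containing $p^N\mathbb{Z}$ for some $N \gg 0$ --- precisely the condition stated.

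The main obstacle I anticipate is the first step: Rognes' theorem is classically phrased as a continuous-homotopy-fixed-point equivalence $L_{K(1)} S^0 \simeq KU_{\hat{p}}^{h\mathbb{Z}_p^\times}$, whereas I need the ``enhanced'' form identifying an entire $\infty$-category with hypercomplete sheaves on $B\mathbb{Z}_p^\times$. This enhancement is standard and appears in \cite{CM}, but its careful setup is the only substantive input; granting it, the rest of the argument is a routine unwinding of Lemma \ref{discrete2continuous}.
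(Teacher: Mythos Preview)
Your approach is correct but genuinely different from the paper's. The paper argues directly: for fully faithfulness it checks that the functor is fully faithful on the compact generator $L_{K(1)}S^0/p$ (using $L_{K(1)}S^0 \simeq KU_{\hat{p}}^{h(\mu\times\mathbb{Z})}$) and that the image of this generator is compact in the target, treating $p=2$ separately via $\md(KU)^{h\{\pm 1\}}\simeq\md(KO)$; for the essential image it shows conservativity of $(-)^{h(\mu\times\mathbb{Z})}$ on the continuous subcategory by a short argument with the cohomological dimension of $\mathbb{Z}$ and the existence of fixed points for continuous $\mathbb{Z}$-actions on $p$-torsion groups. Your argument instead black-boxes the categorical pro-Galois descent equivalence $L_{K(1)}\sp \simeq \mathrm{Sh}^{hyp}(B\mathbb{Z}_p^\times;\md(KU_{\hat{p}})_{\hat{p}})$ and then reuses Lemma~\ref{discrete2continuous} in its $KU_{\hat{p}}$-linear form. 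This is cleaner and avoids the $p=2$ case distinction, at the cost of importing a stronger external result; the paper's version is more self-contained and in effect reproves the height-$1$ descent equivalence by hand. Both routes are valid, and your identification of the essential image via the continuity unpacking is correct.
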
 
\begin{proof} 
Recall that $L_{K(1)} S^0 \simeq  \left( KU_{\hat{p}} \right)^{h (
\mu \times \mathbb{Z})}$. 
Therefore, the functor \eqref{KUfun} is fully faithful on the compact generator
$L_{K(1)} S^0/p$ of the source. To prove that \eqref{KUfun} is fully faithful,
it suffices to prove that  the image of this object in the target is compact. 
When $p > 2$, this follows because homotopy invariants over $\mu \times
\mathbb{Z}$ can be expressed as the retract of a finite limit. This requires an argument for $p = 2$. 
In this case, it suffices to show that the unit is compact
in
$\md(KU)^{h \{\pm 1\}}$, which follows by the Galois descent equivalence
$\md(KU)^{h \{
\pm 1\}} \simeq \md(KO)$, cf.~e.g. \cite[Th.~9.4]{MGal} for an account; in
particular, this shows that taking $\mu$-invariants here can also be expressed as a
retract of a 
finite limit.

For essential surjectivity, it suffices to show that if $M $ is a $p$-complete $KU$-module 
with compatible $\mu \times \mathbb{Z}$-action satisfying the
continuity property in the statement, then $M = 0$ if and only if $M^{h (
\mu \times \mathbb{Z})} =0 $. Indeed, suppose these homotopy
fixed points vanish. Then also by Galois descent up
the faithful $\mu$-Galois extension
of $E_\infty$-rings
$KU_{\hat{p}}^{h \mu} \to KU_{\hat{p}}$, it suffices to see
that $M^{h \mu} = 0$. Now
$(M^{h\mu})^{h\mathbb{Z}} =0$. 
But by the homotopy fixed point spectral sequence,
since $\mathbb{Z}$ has cohomological dimension $1$, we get that $M^{h
\mu} = 0$ as desired. Here we use that any
$p$-adically continuous
$\mathbb{Z}$-action on a nonzero $p$-torsion abelian group has a nontrivial fixed point. 
\end{proof} 

\begin{theorem} 
\label{locinvariantKunneth}
Let $R$ be a commutative $\mathbb{Z}[1/p]$-algebra and let $E$ be a localizing invariant on $R$-linear $\infty$-categories
which commutes with filtered colimits (or just the filtered colimit giving the $p$-cyclotomic extension of $R$). Then the natural map 
\eqref{Emap}
is an equivalence. 
\end{theorem}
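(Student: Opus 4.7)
My plan is to use Proposition \ref{enoughfixedpoints} as the main Galois-descent tool: it reduces the K\"unneth formula \eqref{Emap} to a comparison of $(\mu \times \mathbb{Z})$-homotopy fixed points, which I would then establish via finite-level descent followed by passage to the colimit. First, the map \eqref{Emap} is $(\mu \times \mathbb{Z})$-equivariant and $KU_{\hat{p}}$-linear; the $KU_{\hat{p}}$-action on the right-hand side is inherited from the composite
\[
KU_{\hat{p}} \to L_{K(1)} K(\mathbb{Z}[\zeta_{p^\infty}]) \to L_{K(1)} K(R[\zeta_{p^\infty}]) \to L_{K(1)} E(R[\zeta_{p^\infty}])
\]
together with the canonical $K$-module structure on $E$. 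The LHS is tautologically in the essential image of the functor \eqref{KUfun}, with preimage $L_{K(1)} E(R)$. For the RHS, the continuity condition of Proposition \ref{enoughfixedpoints} on mod-$p$ homotopy follows from $E(R[\zeta_{p^\infty}]) = \varinjlim_n E(R[\zeta_{p^n}])$ (using that $E$ commutes with filtered colimits) together with the observation that $\mathbb{Z}_p^{\times}$ acts on each $E(R[\zeta_{p^n}])$ through the finite quotient $(\mathbb{Z}/p^n)^{\times}$.

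By full faithfulness of \eqref{KUfun}, it then suffices to show the induced map on $(\mu \times \mathbb{Z})$-homotopy fixed points,
\[
L_{K(1)} E(R) \longrightarrow L_{K(1)} E(R[\zeta_{p^\infty}])^{h(\mu \times \mathbb{Z})},
\]
is an equivalence. This is a form of Galois descent for $L_{K(1)} E$ along the cyclotomic tower, which I would prove one finite level at a time. Since $R$ is a $\mathbb{Z}[1/p]$-algebra, at each $n$ the extension $R \to R[\zeta_{p^n}]$ is a finite \'etale Galois cover with group $(\mathbb{Z}/p^n)^{\times}$ of order invertible in $R$, hence descendable; invoking the descent results of \cite{CMNN} for $L_{K(1)}$-localized localizing invariants along descendable maps then yields $L_{K(1)} E(R) \simeq L_{K(1)} E(R[\zeta_{p^n}])^{h(\mathbb{Z}/p^n)^{\times}}$. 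Passing to the filtered colimit in $n$ and using that the $(\mu \times \mathbb{Z})$-action on the level-$n$ piece factors through $(\mathbb{Z}/p^n)^{\times}$ should then complete the argument, by the same cofinality logic used in the proof of Lemma \ref{discrete2continuous}.

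The main obstacle I foresee is the finite-level Galois descent step for an \emph{arbitrary} localizing invariant $E$: for $E = K$ this reduces to Thomason's theorem, but the extension to all localizing invariants after $L_{K(1)}$-localization requires genuinely more structure. If a direct appeal to \cite{CMNN} is not available in the exact form I want, a fallback is to exploit that $|(\mathbb{Z}/p^n)^{\times}|$ is invertible in $R$ to construct the averaging idempotent $|G|^{-1}\sum_{g\in G} g^*$ on $L_{K(1)} E(R[\zeta_{p^n}])$, splitting it as a direct sum whose invariant summand can be identified with $L_{K(1)} E(R)$ using the localization-sequence structure of $E$ applied to the complementary idempotent.
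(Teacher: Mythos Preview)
Your approach is essentially the paper's: reduce via Proposition~\ref{enoughfixedpoints} to checking that $L_{K(1)} E(R) \to L_{K(1)} E(R[\zeta_{p^\infty}])^{h(\mu\times\mathbb{Z})}$ is an equivalence, and then invoke descent for $L_{K(1)} E$ along the cyclotomic tower. The paper dispatches the descent step in one stroke by citing \cite[Th.~7.14]{CM}, which gives \'etale hyperdescent for $L_{K(1)}$ of any filtered-colimit-preserving localizing invariant over $\mathbb{Z}[1/p]$; combined with Lemma~\ref{discrete2continuous} this identifies the $(\mu\times\mathbb{Z})$-fixed points with $L_{K(1)} E(R)$ directly, so the finite-level-then-colimit maneuver is unnecessary (and \cite{CMNN}, while related, is not the reference the paper uses here).

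Your proposed fallback, however, is incorrect and you should discard it: $|(\mathbb{Z}/p^n)^\times| = p^{n-1}(p-1)$, and for $n\geq 2$ this is divisible by $p$. Although $p$ is invertible in $R$, it is certainly not invertible in the $K(1)$-local category, so no averaging idempotent exists on $L_{K(1)} E(R[\zeta_{p^n}])$. Descent along the $p$-part of the cyclotomic tower is genuinely nontrivial and requires the chromatic input of \cite{CM} (ultimately, the nilpotence of the Tate construction $KU_{\hat p}^{tC_p}$), not an elementary splitting.
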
 
\begin{proof} 
To see that \eqref{Emap} is an equivalence, it suffices to prove that it
becomes an
equivalence
after taking $\mu \times \mathbb{Z} \subset
\mathbb{Z}_p^{\times}$-homotopy fixed points thanks to
\Cref{enoughfixedpoints}. 
The homotopy fixed points on the left-hand-side are given by $L_{K(1)} E(R)$. 
For the right-hand-side, the localizing invariant
$L_{K(1)} E(R\otimes_{\mathbb{Z}[1/p]}-)$ satisfies \'etale hyperdescent over $\mathbb{Z}[1/p]$ by \cite[Th.~7.14]{CM}.
Using the evident comparison between continuous cohomology on
$\mathbb{Z}_p^{\times}$ and discrete group cohomology on $\mu
\times \mathbb{Z}$ (which follows from Lemma \ref{discrete2continuous}), we find that the natural map 
$L_{K(1)} E(R) \to 
L_{K(1)} (
E(R[\zeta_{p^\infty}]))^{h ( \mu \times \mathbb{Z})}$ is an
equivalence. Thus, \eqref{Emap} becomes an equivalence after taking homotopy
fixed points and thus is an equivalence. 
\end{proof} 

Finally, \Cref{Kunneththm} follows, since by \Cref{mainthmintro} one reduces to
the case where $R$ is a $\mathbb{Z}[1/p]$-algebra. 
\bibliographystyle{amsalpha}
\bibliography{k1local}

\providecommand{\bysame}{\leavevmode\hbox to3em{\hrulefill}\thinspace}
\providecommand{\MR}{\relax\ifhmode\unskip\space\fi MR }
\providecommand{\MRhref}[2]{%
  \href{http://www.ams.org/mathscinet-getitem?mr=#1}{#2}
}
\providecommand{\href}[2]{#2}
\begin{thebibliography}{CMNN20}

\bibitem[Ang15]{Angeltveit}
Vigleik Angeltveit, \emph{On the algebraic {$K$}-theory of {W}itt vectors of
  finite length}, ar{X}iv:1101.1866 (2015).

\bibitem[BGT13]{BGT}
Andrew~J. Blumberg, David Gepner, and Gon\c{c}alo Tabuada, \emph{A universal
  characterization of higher algebraic {$K$}-theory}, Geom. Topol. \textbf{17}
  (2013), no.~2, 733--838. \MR{3070515}

\bibitem[BMS18]{BMS1}
Bhargav Bhatt, Matthew Morrow, and Peter Scholze, \emph{Integral {$p$}-adic
  {H}odge theory}, Publ. Math. Inst. Hautes \'{E}tudes Sci. \textbf{128}
  (2018), 219--397. \MR{3905467}

\bibitem[BMS19]{BMS2}
\bysame, \emph{Topological {H}ochschild homology and integral {$p$}-adic
  {H}odge theory}, Publ. Math. Inst. Hautes \'{E}tudes Sci. \textbf{129}
  (2019), 199--310. \MR{3949030}

\bibitem[Bru01]{Brun}
Morten Brun, \emph{Filtered topological cyclic homology and relative
  {$K$}-theory of nilpotent ideals}, Algebr. Geom. Topol. \textbf{1} (2001),
  201--230. \MR{1823499}

\bibitem[BS19]{Prisms}
Bhargav Bhatt and Peter Scholze, \emph{Prisms and prismatic cohomology},
  ar{X}iv preprint ar{X}iv:1905.08229 (2019).

\bibitem[Cla17]{Artinmaps}
Dustin Clausen, \emph{A {$K$}-theoretic approach to {A}rtin maps}, ar{X}iv
  preprint ar{X}iv:1703.07842 (2017).

\bibitem[CM19]{CM}
Dustin Clausen and Akhil Mathew, \emph{Hyperdescent and {\'e}tale
  {$K$}-theory}, ar{X}iv preprint ar{X}iv:1905.06611 (2019).

\bibitem[CMM18]{CMM}
Dustin Clausen, Akhil Mathew, and Matthew Morrow, \emph{{$K$}-theory and
  topological cyclic homology of henselian pairs}, ar{X}iv preprint
  ar{X}iv:1803.10897 (2018).

\bibitem[CMNN20]{CMNN}
Dustin Clausen, Akhil Mathew, Niko Naumann, and Justin Noel, \emph{Descent in
  algebraic {$K$}-theory and a conjecture of {A}usoni--{R}ognes}, J. Eur. Math.
  Soc. (JEMS) \textbf{22} (2020), no.~4, 1149--1200. \MR{4071324}

\bibitem[DM98]{DM98}
W.~G. Dwyer and S.~A. Mitchell, \emph{On the {$K$}-theory spectrum of a ring of
  algebraic integers}, $K$-Theory \textbf{14} (1998), no.~3, 201--263.
  \MR{1633505}

\bibitem[Gab92]{Gabber}
Ofer Gabber, \emph{{$K$}-theory of {H}enselian local rings and {H}enselian
  pairs}, Algebraic {$K$}-theory, commutative algebra, and algebraic geometry
  ({S}anta {M}argherita {L}igure, 1989), Contemp. Math., vol. 126, Amer. Math.
  Soc., Providence, RI, 1992, pp.~59--70. \MR{1156502}

\bibitem[GH04]{GH}
P.~G. Goerss and M.~J. Hopkins, \emph{Moduli spaces of commutative ring
  spectra}, Structured ring spectra, London Math. Soc. Lecture Note Ser., vol.
  315, Cambridge Univ. Press, Cambridge, 2004, pp.~151--200. \MR{2125040}

\bibitem[Hes06]{Hes06}
Lars Hesselholt, \emph{On the topological cyclic homology of the algebraic
  closure of a local field}, An alpine anthology of homotopy theory, Contemp.
  Math., vol. 399, Amer. Math. Soc., Providence, RI, 2006, pp.~133--162.
  \MR{2222509}

\bibitem[HN19]{HN}
Lars Hesselholt and Thomas Nikolaus, \emph{Topological cyclic homology},
  Handbook of homotopy theory (Haynes Miller, ed.), CRC Press/Chapman and Hall,
  2019.

\bibitem[Hop14]{Ho14}
Michael~J. Hopkins, \emph{{$K(1)$}-local {$E_\infty$}-ring spectra},
  Topological modular forms, Math. Surveys Monogr., vol. 201, Amer. Math. Soc.,
  Providence, RI, 2014, pp.~287--302. \MR{3328537}

\bibitem[Joy85]{Joy85}
Andr\'{e} Joyal, \emph{{$\delta$}-anneaux et vecteurs de {W}itt}, C. R. Math.
  Rep. Acad. Sci. Canada \textbf{7} (1985), no.~3, 177--182. \MR{789309}

\bibitem[LMT20]{LMT}
Markus Land, Lennart Meier, and Georg Tamme, \emph{Vanishing results for
  chromatic localizations of algebraic {$K$}-theory}, ar{X}iv preprint
  ar{X}iv:2001:10425 (2020).

\bibitem[LT19]{LT}
Markus Land and Georg Tamme, \emph{On the {$K$}-theory of pullbacks}, Ann. of
  Math. (2) \textbf{190} (2019), no.~3, 877--930. \MR{4024564}

\bibitem[Lur18]{Ell2}
Jacob Lurie, \emph{Elliptic cohomology {II}: Orientations}, Available at
  \url{https://www.math.ias.edu/~lurie/papers/Elliptic-II.pdf}.

\bibitem[Mat16]{MGal}
Akhil Mathew, \emph{The {G}alois group of a stable homotopy theory}, Adv. Math.
  \textbf{291} (2016), 403--541. \MR{3459022}

\bibitem[Mit97]{Mitchell}
Stephen~A. Mitchell, \emph{Hypercohomology spectra and {T}homason's descent
  theorem}, Algebraic {$K$}-theory ({T}oronto, {ON}, 1996), Fields Inst.
  Commun., vol.~16, Amer. Math. Soc., Providence, RI, 1997, pp.~221--277.
  \MR{1466977}

\bibitem[Mit00]{Mit00}
\bysame, \emph{Topological {$K$}-theory of algebraic {$K$}-theory spectra},
  $K$-Theory \textbf{21} (2000), no.~3, 229--247. \MR{1803229}

\bibitem[Niz98]{Niziol}
Wies\l~awa Nizio\l, \emph{Crystalline conjecture via {$K$}-theory}, Ann. Sci.
  \'{E}cole Norm. Sup. (4) \textbf{31} (1998), no.~5, 659--681. \MR{1643962}

\bibitem[NS18]{NS18}
Thomas Nikolaus and Peter Scholze, \emph{On topological cyclic homology}, Acta
  Math. \textbf{221} (2018), no.~2, 203--409. \MR{3904731}

\bibitem[Qui72]{Qui72}
Daniel Quillen, \emph{On the cohomology and {$K$}-theory of the general linear
  groups over a finite field}, Ann. of Math. (2) \textbf{96} (1972), 552--586.
  \MR{0315016}

\bibitem[Rez98]{RezkHM}
Charles Rezk, \emph{Notes on the {H}opkins-{M}iller theorem}, Homotopy theory
  via algebraic geometry and group representations ({E}vanston, {IL}, 1997),
  Contemp. Math., vol. 220, Amer. Math. Soc., Providence, RI, 1998,
  pp.~313--366. \MR{1642902}

\bibitem[Rog08]{Rog08}
John Rognes, \emph{Galois extensions of structured ring spectra. {S}tably
  dualizable groups}, Mem. Amer. Math. Soc. \textbf{192} (2008), no.~898,
  viii+137. \MR{2387923}

\bibitem[Sna81]{Snaith}
Victor Snaith, \emph{Localized stable homotopy of some classifying spaces},
  Math. Proc. Cambridge Philos. Soc. \textbf{89} (1981), no.~2, 325--330.
  \MR{600247}

\bibitem[Sus83]{Suslin}
A.~Suslin, \emph{On the {$K$}-theory of algebraically closed fields}, Invent.
  Math. \textbf{73} (1983), no.~2, 241--245. \MR{714090}

\bibitem[Tho85]{Tho85}
R.~W. Thomason, \emph{Algebraic {$K$}-theory and \'{e}tale cohomology}, Ann.
  Sci. \'{E}cole Norm. Sup. (4) \textbf{18} (1985), no.~3, 437--552.
  \MR{826102}

\end{thebibliography}
\end{document}